\documentclass[12pt]{amsart}
\usepackage{graphicx}
\usepackage{tikz}        % Tikzpicture
\usepackage{amssymb}
\usepackage{epstopdf}
\usepackage[mathscr]{eucal}
\usepackage{amsmath,amssymb,amscd}
\usepackage{color}
\usepackage{epsfig}
\usepackage{hyperref}
\newtheorem{theorem}{Theorem}
\newtheorem{lemma}{Lemma}

\newtheorem{definition}{Definition}
\newtheorem{corollary}{Corollary}

\newtheorem{proposition}{Proposition}
\theoremstyle{definition}

\setcounter{tocdepth}{3}
\setcounter{secnumdepth}{3}

%%%%%%%%%%%%%%%%%%%%%%%%%%%%%%%%%%%%%%%%%% math fronts
\def \mb{\mathbb}

%%%%%%%%%%%%%%%%%%%%%%%%%%%%%%%%%%%%%%%%%% math symbols

%%%%%%%%%%%%%%%%%%%%%%%%%%%%%%% number fileds

\def \Z{\mb Z}                  %%%%% Integer
\def \N{\mb N}                %%%%% Natural Number
                %%%%% Rational Number
\def \R{\mb R}                 %%%%% Real Number
\def \C{\mb C}                 %%%%% Complex Number
     %%%%% Complex Number with Infinity

%%%%%%%%%%%%%%%%%%%%%%%%%%%%%%% greek symbols

         %%%%% alpha
           %%%%% beta
         %%%%% Delta
       %%%%% \varphi
       %%%%% \varphi
%%%%%%%%%%%%%%%%%%%%%%%%%%%%%%%%%%%%% operations
  %%%%% mapsto
       %%%%% otimes
     %%%%% widetilde
        %%%%% rightarrow
       %%%%% double rightarrow

%%%%%%%%%%%%%%%%%%%%%%%%%%%%%%%%%%%%% function

        % % %ellipticspace
        % % %hyperbolicspace
        % % %twospace

\newcommand {\q} {\mathbf{q}}

\def \and{\mbox{and}}
%\addtolength{\textwidth}{0cm}
\usepackage[top=4cm, bottom=4cm, left=3.5cm, right=3.5cm]{geometry}
\DeclareGraphicsRule{.tif}{png}{.png}{`convert #1 `dirname #1`/`basename #1 .tif`.png}

\title[Stability of Periodic orbits]{Stability of Periodic Orbits by Conley-Zehnder index theory}
\author{Yanxia Deng, Daniel Offin}
\email{yd17@uvic.ca, offind@mast.queensu.ca}
\address{Department of Mathematics and statistics \\University of Victoria\\ Victoria, BC, V8P 5C2 \\Canada}
\address{Department of Mathematics and statistics \\Queen's University\\ Kingston, ON K7L 3N6 \\Canada}
\date{\today}

\begin{document}
\maketitle
\begin{abstract}
We give a necessary and sufficient condition for strong stability of low dimensional Hamiltonian systems, in terms of the iterates of a closed orbit and the Conley-Zehnder index. Applications to Mathieu equation and stable harmonic oscillations for forced pendulum type equations are considered as applications of the main result. 

\end{abstract}

%%%%%%%%%%%%%%%%%%%%%%%%%%%%%%%%%%%%%%%%%%%%%%%%%%%%%%%%%%%%%%%%%%%%%%%%%%%%%%%%%%%%%%%%%%%%%%%%%%%%%%%%%%%%%%%%%%%%%%%%%%%%%%%%%%%%%%%%%%%%%%%%%%%%%%%%%%%%%%%%%%%%%%%%%%%%%%%%%%

%%%%%%%%%%%%%%%%%%%%%%%%%%%%%%%%%%%%%%%%%%%%%%%%%%%%%%%%%%%%%%%%%%%%%%%%%%%%%%%%%%%%%%%%%%%%%%%%%%%%%%%%%%%%%%%%%%%%%%%%%%%%%%%%%%%%%%%%%%%%%%%%%%%%%%%%%%%%%%%%%%%%%%%%%%%%%%%%%%%%%%%%%%%%%%%%%%%

\section{Introduction}

The determination of stability type for periodic orbits in Hamiltonian systems is the first important goal of dynamics, after the question of existence. For linearized stability, the main tool in this quest is the theory of Liapunov multipliers and the consequent structure of the monodromy matrix for a periodic orbit \cite{MeyHallOff}. In this note we will present an alternative and equivalent formulation of linearized stability in terms of the Conley-Zehnder index theory for one degree of freedom time-dependent Hamiltonian systems. 

 The variational method using either the direct method of the calculus of variations or a saddle point method such as the mountain pass theorem for determining existence of closed orbits has experienced huge success in the preceeding period of four decades \cite{ChMo, KCC, Ferrario,mw}. There have been many attempts during the past century to tie the variational method to the determination of stability type, beginning with Poincar\'e and his description of instability for arclength minimizing closed geodesics on an orientable surface. In his book Dynamical Systems \cite{Birkhoff},  Birkhoff conjectured  that the direct method would always lead to Lyapunov instability independent of the dimension, however this was not to be the case as counterexamples show \cite{offin1}. Some extenstions of Poincar\'e's result may be found in \cite{ BolTres, deng, husun, offin1}. An earlier paper \cite{offin2}, addresses the question of stability for planar time dependent systems using  the Morse index for periodic and anti-periodic  boundary conditions to give a necessary and sufficient condition for strong stability . 

More recently, Ortega \cite{ort} has studied the classical forced pendulum equation $\ddot x + \beta \sin x = f(t)$  for T-periodic forcing $f(t)$ with mean value zero, $\int_0^Tfdt=0$. In his paper Ortega has given a simple condition on the parameters of the forced pendulum equation  to guarantee at least one strongly stable solution of the system independent of the size of the forcing. As pointed out by Ortega, the condition $\int_0^Tfdt=0$ allows a global variational approach to address  the question of existence for  periodic solutions of the pendulum system. In particular the functional 
\[ A_T(q) = \int_0^T \left ( \frac{1}{2} \dot q^2+ \beta \cos(q) +qf \right ) dt, \qquad q \in H^1_T(S^1) \]
where $H^1_T(S^1)$ denotes the Sobolev space of absolutely continuous functions on [0,T] with periodic boundary conditions $q(0)=q(T)$, has at least two geometrically distinct critical points $q_1, q_2 \in H^1_T(S^1)$ \cite{mw}. The first $q_1$ is a minimizing critical point and the second $q_2$ is a mountain pass. The minimizing solution is Lyapunov unstable by the result of Poincar\'e. The stability status of the mountain pass can be resolved, with certain restrictions, using the methods of this paper.  
 
In the sequel we will recover Ortega's result as an application of our main result, Theorem \ref{thm_dc}. Moreover we will show that this stable solution most often arises as a mountain pass critical point of an associated functional with periodic boundary conditions. 

 Of the many dynamical properties exhibited in simple low dimensional Hamiltonian systems,  stability and instability of periodic orbits have implications for the global dynamics of the system. The present work gives a necessary and sufficient condition for strong stability, in terms of the iterates of a closed orbit, which we describe in section 4. In Section 2 and 3 we review the Conley-Zehnder index and its relation to periodic Hamiltonian system and the Morse index. In section 5, applications to Mathieu equation and stable harmonic oscillations for forced pendulum type equations are considered as applications of the main result.

\section{Conley-Zehnder index for $Sp(2)$}

The Conley-Zehnder index is an important invariant of linear periodic Hamiltonian systems. It plays an important role in connection with a suitably defined Morse index of periodic solutions of non-linear systems. The index was introduced by Conley-Zehnder \cite{CZ1} for the non-degenerate case with $n\geq 2$, Long-Zehnder in \cite{LZ} for the non-degenerate case with $n=1$, and Long \cite{long1} and Viterbo \cite{viterbo} independently for the degenerate case. 

Roughly speaking, the Conley-Zehnder index is the number of half windings made by the fundamental solution of a linear Hamiltonian system in the symplectic group. For the reader's convenience, we review the Conley-Zehnder index on $Sp(2)$. Those readers already with a good knowledge of the Conley-Zehnder index may skip this section.

%We will follow the expositions in \cite{alberto}, \cite{CZ1}, \cite{long}.  
%%%%%%%%%symplectic group%%%%%%%%%%%%%%%%%%%%%%%%%%%%%%%%%%

As usual, let $Sp(2n)$ denote the symplectic group,
$$Sp(2n)=\{A\in\mathbb{R}^{2n\times 2n}|A^TJA=J\}$$
where $$J=\left(
           \begin{array}{cc}
             0 & -I_n \\
             I_n & 0 \\
           \end{array}
         \right)$$

such that $(\mathbb{R}^{2n},\omega_0)$ with $\omega_0(v,w):=Jv\cdot w$ is the standard symplectic vector space. Here $v\cdot w$ is the usual inner product on $\mathbb{R}^{2n}$. Let us denote $$\mathcal{P}(2n):=\{\gamma\in C^0([0,T],Sp(2n))|\gamma(0)=I\}$$ the space of all the continuous symplectic paths, and $$\mathcal{P}^*(2n):=\{\gamma\in \mathcal{P}(2n)|\det(\gamma(T)-I)\neq 0\}$$ the space of all the non-degenerate symplectic paths. 

When $n=1$, the symplectic group $Sp(2)$ coincides with $SL(2, \mathbb{R})$, i.e. $2\times 2$ matrices with determinant 1. Some of the materials in this section can be found in \cite{alberto, CZ1, long, MeyHallOff} in  greater generality than presented here.

\begin{proposition}
The symplectic group $Sp(2)$ is homeomorphic to $\mathbb{R}^{2}\times S^1$, where $S^1$ is the unit circle in the complex plane.
\end{proposition}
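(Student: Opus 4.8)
The plan is to realize $Sp(2)=SL(2,\R)$ as a product of the compact rotation group with a contractible factor, using the Gram--Schmidt ($QR$) decomposition; its advantage over the (equivalent) polar decomposition is that it is given by explicit continuous formulas in the matrix entries, so bicontinuity is transparent. As already noted in the excerpt, for $n=1$ we have $Sp(2)=SL(2,\R)$ (indeed $A^{T}JA=(\det A)J$ for every $2\times 2$ matrix $A$, so $A^{T}JA=J$ is equivalent to $\det A=1$).

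First I would take $A\in SL(2,\R)$ with columns $v_{1},v_{2}$ and run Gram--Schmidt: since $v_{1}\neq 0$ and $v_{1},v_{2}$ are independent, one obtains a factorization $A=QR$ with $Q\in O(2)$ and $R=\begin{pmatrix} r_{11} & r_{12}\\ 0 & r_{22}\end{pmatrix}$ upper triangular with $r_{11}>0$ and $r_{22}>0$, and this factorization is unique. Taking determinants gives $\det Q\cdot r_{11}r_{22}=1$; since $r_{11}r_{22}>0$ we get $\det Q=1$, i.e. $Q\in SO(2)$, and hence $r_{11}r_{22}=1$, so $r_{22}=r_{11}^{-1}$.

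Next I would identify the two factors. The group $SO(2)=\left\{\begin{pmatrix}\cos\theta & -\sin\theta\\ \sin\theta & \cos\theta\end{pmatrix}\right\}$ is homeomorphic to the unit circle $S^{1}\subset\C$ via $\theta\mapsto e^{i\theta}$. The set of admissible $R$ is parametrized by $(r_{11},r_{12})\in(0,\infty)\times\R$, and $(0,\infty)\cong\R$ via the logarithm, so this factor is homeomorphic to $\R^{2}$. Therefore the map
\[ \Phi\colon SO(2)\times\big((0,\infty)\times\R\big)\longrightarrow SL(2,\R),\qquad \big(Q,(r_{11},r_{12})\big)\longmapsto Q\begin{pmatrix} r_{11} & r_{12}\\ 0 & r_{11}^{-1}\end{pmatrix}, \]
is a continuous bijection by the existence and uniqueness above, and composing with the two identifications yields a continuous bijection $\R^{2}\times S^{1}\to Sp(2)$.

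The only point requiring a little care — and the main (mild) obstacle — is continuity of $\Phi^{-1}$, i.e. that the Gram--Schmidt output depends continuously on $A$. This is immediate from the explicit formulas: $r_{11}=\lVert v_{1}\rVert$, the first column of $Q$ is $v_{1}/\lVert v_{1}\rVert$, $r_{12}=\langle v_{2},v_{1}\rangle/\lVert v_{1}\rVert$, and the second column of $Q$ is the normalization of $v_{2}-r_{12}\,v_{1}/\lVert v_{1}\rVert$; no denominator vanishes on the locus $\det A\neq 0$, so each is continuous in $A$. Hence $\Phi$ is a homeomorphism and $Sp(2)\cong\R^{2}\times S^{1}$. (The same conclusion follows from the polar decomposition $A=OP$ with $O\in SO(2)$ and $P$ symmetric positive definite of determinant $1$, writing $P=\begin{pmatrix} a & b\\ b & (1+b^{2})/a\end{pmatrix}$ with $(a,b)\in(0,\infty)\times\R$; there continuity of the inverse rests instead on continuity of the matrix square root on positive definite matrices.)
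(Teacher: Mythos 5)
Your proof is correct, but it takes a genuinely different route from the paper. The paper uses the polar decomposition $A=PO$ with $P=(AA^{T})^{1/2}$ symplectic symmetric positive definite and $O$ symplectic orthogonal; the key point there is that \emph{both factors stay inside} $Sp(2)$, so that $Sp(2)$ is exhibited as the product of the intrinsic pieces $S(2)\cong\R^{2}$ (parametrized via $P=e^{S}$ with $S^{T}J+JS=0$) and $Sp(2)\cap O(2)=U(1)\cong S^{1}$ --- an identification the paper reuses immediately afterwards (Theorem \ref{rotation}(4)) and which generalizes verbatim to $Sp(2n)\cong\R^{n(n+1)}\times U(n)$. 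You instead use the Gram--Schmidt ($QR$, i.e.\ Iwasawa) decomposition of $SL(2,\R)$, after correctly observing that $Sp(2)=SL(2,\R)$ via $A^{T}JA=(\det A)J$. Your factorization, determinant bookkeeping forcing $Q\in SO(2)$ and $r_{22}=r_{11}^{-1}$, and uniqueness argument are all sound, and your approach buys something the paper's write-up leaves implicit: bicontinuity of the decomposition is transparent from the explicit Gram--Schmidt formulas, whereas the polar decomposition's continuity rests on continuity of the positive-definite square root, which the paper asserts without proof. The trade-off is that the triangular factor $R$ is not itself a symplectically distinguished object, so your decomposition does not by itself supply the identification of the maximal compact subgroup with $U(1)$ that the rotation function construction in the rest of the section relies on --- though of course it yields the stated homeomorphism, and hence $\pi_{1}(Sp(2))=\Z$, just as well.
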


\begin{proof}
Every invertible matrix $A$ can be written in polar form
$$A=PO.\quad P:=(AA^T)^{1/2},\quad O:=P^{-1}A$$ 
where $P$ is symmetric and positive definite, and $O$ is orthogonal. Such a decomposition is unique and it depends continuously on the matrix $A$. Now, if $A$ is symplectic, i.e. $A=J^{-1}A^{-T}J$, then
$$PO=J^{-1}(PO)^{-T}J=J^{-1}(P)^{-T}O^{-T}J=J^{-1}P^{-T}JJ^{-1}O^{-T}J$$
since $J$ is orthogonal, the matrix $J^{-1}P^{-T}J$ is still symmetric and positive definite and $J^{-1}O^{-T}J$ is orthogonal. By the uniqueness of the polar decomposition, we have
$$J^{-1}P^{-T}J=P, \quad J^{-1}O^{-T}J=O$$
which means $P$ and $O$ are symplectic. Thus $Sp(2)$ is homeomorphic to the direct product of symplectic symmetric positive definite matrices and symplectic orthogonal matrices.

Let $S(2)$ be the set of symplectic, symmetric and positive definite $2\times 2$ matrices, it is homeomorphic to $\mathbb{R}^2$. In fact, every symmetric positive definite matrix $P$ can be expressed as $P=e^S$, where $S$ is symmetric; and $P$ is symplectic if and only if $S$ satisfies
$$S^TJ+JS=0$$
Therefore, $S(2)$ is homeomorphic to the vector space
$$\{S\in\mathbb{R}^{2\times 2}: S^T=S, \,S^TJ+JS=0\}$$
The dimension of this vector space is 2.

The set of symplectic orthogonal matrices coincides with the unitary group, and in particular $O(2)\cap Sp(2)=U(1)$ which is homeomorphic to $S^1$. Hence $Sp(2)\cong\mathbb{R}^2\times S^1$.
\end{proof}

\begin{corollary}

The fundamental group of $Sp(2)$ is the free cyclic group;
$\pi_1(Sp(2))=\mathbb{Z}$.
\end{corollary}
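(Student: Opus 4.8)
The plan is to deduce this immediately from the preceding Proposition, which establishes the homeomorphism $Sp(2)\cong\mathbb{R}^2\times S^1$. Since the fundamental group is a homotopy (indeed homeomorphism) invariant, it suffices to compute $\pi_1(\mathbb{R}^2\times S^1)$.

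First I would invoke the standard fact that the fundamental group of a product of topological spaces is the direct product of the fundamental groups of the factors, so $\pi_1(\mathbb{R}^2\times S^1)\cong\pi_1(\mathbb{R}^2)\times\pi_1(S^1)$, using any basepoint (all spaces here are path-connected, so the choice is immaterial). Next I would note that $\mathbb{R}^2$ is contractible, hence simply connected, giving $\pi_1(\mathbb{R}^2)=0$; and that $\pi_1(S^1)\cong\mathbb{Z}$, the classical computation via the covering map $\mathbb{R}\to S^1$. Combining these, $\pi_1(Sp(2))\cong 0\times\mathbb{Z}\cong\mathbb{Z}$, the infinite (free) cyclic group.

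There is essentially no obstacle here: all the work was done in the Proposition, and the only inputs are textbook facts. The one point worth a sentence of care is justifying that the homeomorphism transports $\pi_1$ faithfully and that the product formula applies — both are immediate since homeomorphisms induce isomorphisms on $\pi_1$ and the factors are nice (locally path-connected, path-connected) spaces. If one wished to be fully self-contained, one could alternatively remark that $\mathbb{R}^2\times S^1$ deformation retracts onto $\{0\}\times S^1$, reducing the computation directly to $\pi_1(S^1)=\mathbb{Z}$.
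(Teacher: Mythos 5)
Your proof is correct and is exactly the argument the paper intends: the Corollary is stated without a separate proof precisely because it follows immediately from the Proposition's homeomorphism $Sp(2)\cong\mathbb{R}^2\times S^1$ together with the standard facts $\pi_1(\mathbb{R}^2)=0$ and $\pi_1(S^1)=\mathbb{Z}$. Nothing to add.
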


There is a symplectically invariant and continuous function from $Sp(2)$ to $S^1$ which induces an isomorphism between their fundamental groups. We will call it the rotation function. To define this function, we need some information about the eigenvalues and the Krein signature on $Sp(2)$. 

The eigenvalues of $A\in Sp(2)$ must be of the form $\lambda, \,\frac{1}{\lambda}$, where $\lambda\in S^1\cup\mathbb{R}$. So the eigenvalues of $A$ are either conjugate pairs on the unit circle or real reciprocals. The eigenvalues $1$ and $-1$ are always double.

It is useful to introduce the Hermitian form on $\mathbb{C}^2$: $$g(v,w):=\langle Gv,w\rangle, \quad G:=-iJ=\begin{pmatrix}0&i\\-i&0\end{pmatrix}$$
where $\langle \cdot,\cdot\rangle$ is the standard Hermitian product on $\mathbb{C}^{2}$.
The complex symplectic group $Sp(2,\mathbb{C})$ consists of the g-unitary complex linear automorphisms of $\mathbb{C}^{2}$. Equivalently, $A\in Sp(2,\mathbb{C})$ if and only if $A^*GA=G$, where $A^*=\bar{A}^T$ is the conjugate transpose. A matrix belongs to $Sp(2)$ if and only if it is in $Sp(2,\mathbb{C})$ and it is real.

Assume that $A\in Sp(2)$ has eigenvalues $\lambda\neq\pm 1$ and $\bar{\lambda}$ on the unit circle and that $v$ and $\bar{v}$ are the corresponding eigenvectors. Then
$$\langle Gv, \bar{v}\rangle=\langle A^*GAv, \bar{v}\rangle=\langle GAv, A\bar{v}\rangle=\lambda^2\langle Gv, \bar{v}\rangle$$

Since $\lambda\neq\pm1$, $\langle Gv, \bar{v}\rangle=0$. So $\{v, \bar{v}\}$ is a $G$-orthogonal basis of $\mathbb{C}^2$. Hence $\langle Gv, {v}\rangle$ and $\langle G\bar{v}, \bar{v}\rangle$ are real and non-zero. This justifies the following definition.

\begin{definition}[Krein sign]
If $\lambda\in S^1\setminus \{\pm 1\}$ is an eigenvalue of $A\in Sp(2)$ and $v$ is the corresponding eigenvector, the Krein sign of $\lambda$ is the sign of $\langle Gv, {v}\rangle$. If the Krein sign is positive we say $\lambda$ is Krein-positive.
\end{definition}

Since the signature of $G$ is $(1,1)$, if $\lambda\in S^1\setminus \{\pm 1\}$ is Krein-positive, then $\bar{\lambda}$ is Krein-negative. For the double eigenvalues $\pm1$, we will consider them as a pair of eigenvalues, one of which is Krein-positive, the other is Krein-negative.

%%%%%%%%%%%%rotation function%%%%%%%%%%%%%%%%%%%%%%%%%%%%%%%%%%%%%%%%%%%%%%%%%%%%%%%%%%%%%%%%%%%%%%%%%%%%%%%%%%
Now we are ready to define the rotation function.

Let $\lambda, \lambda^{-1}$ be the eigenvalues of $A\in Sp(2)$ in the form where $|\lambda|<1$ or $|\lambda|=1$ and $\lambda$ is Krein-positive. The rotation function $$\rho: Sp(2)\to S^1$$ is defined as
$$\rho(A):=\frac{\lambda}{|\lambda|}$$

\begin{theorem}[cf. \cite{alberto}, \cite{long}] \label{rotation}
The rotation function $\rho$ defined above has the following properties:
\begin{itemize}
\item[(1)](Continuity) The function $\rho$ is continuous.
\item[(2)](Symplectic invariance) $\rho(MAM^{-1})=\rho(A)$ for any $A,M$ in $Sp(2)$.
\item[(3)](Normalization) $\rho(A)=\pm 1$ if $A$ does not have eigenvalues on the unit circle.
\item[(4)](Value on the unitary group) If $A\in Sp(2)\cap O(2)$, and $U$ is its corresponding unitary matrix in $U(1)$, then $\rho(A)=\det(U)$.
\item[(5)](Homotopy) The map $\rho$ induces an isomorphism of fundamental groups.
\end{itemize}
\end{theorem}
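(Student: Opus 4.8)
The plan is to verify the five properties in the order (2), (3), (4), (1), (5): the homotopy statement (5) will drop out once continuity (1) and the value on the unitary group (4) are available, together with the structure of $Sp(2)$ established above. Properties (2) and (3) are immediate. For symplectic invariance, both the eigenvalues and the Krein sign are conjugation invariants: if $Av=\lambda v$ and $M\in Sp(2)$ then $(MAM^{-1})(Mv)=\lambda Mv$, and since $M$ lies in $Sp(2,\mathbb{C})$ we have $M^{*}GM=G$, so $\langle G(Mv),Mv\rangle=\langle M^{*}GM v,v\rangle=\langle Gv,v\rangle$; hence the eigenvalue selected by the recipe defining $\rho$ is the same for $A$ and $MAM^{-1}$. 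For normalization, if $A$ has no eigenvalue on $S^1$ its eigenvalues are real reciprocals $\lambda,\lambda^{-1}$ with $|\lambda|\neq1$, and they have the same sign since their product is $1$, so $\rho(A)=\lambda/|\lambda|\in\{\pm1\}$.

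For (4), under the identification $O(2)\cap Sp(2)=SO(2)=U(1)$ an element is a rotation matrix $R_\theta$ with eigenvalues $e^{\pm i\theta}$, and the corresponding $U\in U(1)$ has $\det U=e^{i\theta}$. A short computation shows that $v=(1,-i)^{T}$ is an eigenvector of $R_\theta$ for the eigenvalue $e^{i\theta}$ and that $\langle Gv,v\rangle=2>0$, so $e^{i\theta}$ is Krein-positive and therefore $\rho(R_\theta)=e^{i\theta}=\det U$.

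For continuity (1) I would partition $Sp(2)$ by the trace. On the open set $\{|\operatorname{tr}A|>2\}$ the eigenvalues are real reciprocals of one sign, so $\rho\equiv1$ on $\{\operatorname{tr}A>2\}$ and $\rho\equiv-1$ on $\{\operatorname{tr}A<-2\}$. On $\{|\operatorname{tr}A|<2\}$ the eigenvalues $e^{\pm i\theta}$, $0<\theta<\pi$, are simple, hence vary continuously with $A$, and by the discussion preceding the Krein sign definition $\langle Gv,v\rangle\neq0$ varies continuously, so the selected eigenvalue (the Krein-positive one) varies continuously and $\rho$ is continuous there. On $\{\operatorname{tr}A=\pm2\}$ the only eigenvalue is $\pm1$ and $\rho\equiv\pm1$. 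It remains to check continuity across $\{\operatorname{tr}A=\pm2\}$: approaching a matrix with double eigenvalue $+1$ either from the hyperbolic side (where $\rho=1$) or from the elliptic side (where $\theta\to0$, so the normalized selected eigenvalue tends to $1$) gives limit $1$, and symmetrically one gets $-1$ at $\{\operatorname{tr}A=-2\}$; hence $\rho$ is continuous on $Sp(2)$.

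Finally (5): by the Proposition above $Sp(2)$ deformation retracts onto its circle factor $O(2)\cap Sp(2)=U(1)$, and by its Corollary $\pi_1(Sp(2))\cong\mathbb{Z}$ is generated by a loop winding once around this $U(1)$. Restricted to $U(1)$, property (4) identifies $\rho$ with the map $U\mapsto\det U$, which on $U(1)$ is the identity $S^1\to S^1$; so $\rho$ carries a generator of $\pi_1(Sp(2))$ to a generator of $\pi_1(S^1)$, and $\rho_{*}$ is an isomorphism. The step I expect to require the most care is the continuity (1) at the degenerate matrices with eigenvalue $\pm1$: there the Krein sign of the Definition above is undefined and the two eigenvalues coalesce, so one must check that the normalized selected eigenvalue has a limit independent of whether the nearby matrices are hyperbolic or elliptic — which is precisely why for a double eigenvalue $\pm1$ one declares it a Krein-positive/Krein-negative pair.
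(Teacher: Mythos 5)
Your proof is correct and follows essentially the same route as the paper: the key computation for (4) is the same verification that $e^{i\theta}$ is Krein-positive via the eigenvector $(1,-i)^T$ with $\langle Gv,v\rangle=2$, and (5) is deduced exactly as in the paper from the retraction of $Sp(2)\cong\mathbb{R}^2\times S^1$ onto $U(1)$ together with (4). You simply spell out (1)--(3), which the paper dismisses as straightforward; your case analysis by trace, including the check of continuity across the parabolic locus $\operatorname{tr}A=\pm2$, is a correct filling-in of that gap.
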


\begin{proof}
Continuity, symplectic invariance and normalization is straightforward from the definition of $\rho$. Let us show (4) and (5). If $A$ is in $Sp(2)\cap O(2)=SO(2)=U(1)$, then $A$ has the form
$$R(\theta)=\begin{pmatrix}\cos\theta&-\sin\theta\\\sin\theta&\cos\theta\end{pmatrix}$$
for some $\theta\in[0, 2\pi]$. The eigenvalues of $A$ are $e^{\pm i\theta}$ and an eigenvector for $e^{i\theta}$ is $$v=\begin{pmatrix}\cos\theta+i\sin\theta\\\sin\theta-i\cos\theta\end{pmatrix}$$
Calculating $$\langle Gv, {v}\rangle=2$$
So $e^{i\theta}$ is Krein-positive and $\rho(A)=e^{i\theta}$. Therefore, $\rho$ restricting on $Sp(2)\cap O(2)$ coincides with the map
$$\mu: SO(2)\to U(1)$$
$$R(\theta)\mapsto e^{i\theta}$$
Thus $\rho$ is homotopic to $\mu$, and $\rho$ induces an isomorphism between the fundamental groups of $Sp(2)$ and $S^1$.
\end{proof}

We need a little more information about the topology of $Sp(2)$ before we can define the Conley-Zehnder index. Namely, $Sp(2)$ can be divided into three components:
$$Sp(2)^\pm=\{A\in Sp(2)|\pm \det(A-I)<0\}$$
$$Sp(2)^0=\{A\in Sp(2)|\det(A-I)=0\}$$
Let $Sp(2)^*:=Sp(2)^+\cup Sp(2)^-$, and we fix two representatives $M^{\pm}$ in $Sp(2)^{\pm}$ with 
$$M^+=\begin{pmatrix}2&0\\0&\frac{1}{2}\end{pmatrix}\quad\text{and}\quad M^-=\begin{pmatrix}-2&0\\0&-\frac{1}{2}\end{pmatrix}$$

Easy calculation shows that $\rho(M^+)=1$ and $\rho(M^-)=-1$.
\begin{theorem}[Long \cite{long1999}]\label{con}
The open sets $Sp(2)^+$ and $Sp(2)^-$ are path connected and simply connected in $Sp(2)$.
\end{theorem}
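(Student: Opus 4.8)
The plan is to work in the polar/rotation coordinates already set up in this section, realizing $Sp(2) \cong S(2) \times U(1) \cong \mathbb{R}^2 \times S^1$, and to track where the condition $\pm\det(A-I) < 0$ lands. First I would reduce to understanding the locus $Sp(2)^0 = \{\det(A-I)=0\}$, since $Sp(2)^\pm$ are the two open pieces of its complement. For $A \in Sp(2)$ with eigenvalues $\lambda, \lambda^{-1}$, one has $\det(A-I) = 2 - \operatorname{tr}(A)$, so $Sp(2)^+ = \{\operatorname{tr}(A) > 2\}$ (the hyperbolic-with-positive-trace region, where $\rho \equiv 1$) and $Sp(2)^- = \{\operatorname{tr}(A) < 2\}$ (containing all elliptic elements with $\lambda \neq 1$, all negative-hyperbolic elements, and the parabolic pieces with trace $<2$); the degenerate set $Sp(2)^0 = \{\operatorname{tr}(A) = 2\}$ consists of $I$ together with two parabolic strata. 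So the first key step is this explicit trace description.

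Next, for path connectedness I would give an explicit normal-form argument: using the conjugation invariance is not available since these sets are not conjugation-invariant, so instead I would exhibit, for each $A \in Sp(2)^+$, an explicit path inside $Sp(2)^+$ to the representative $M^+ = \operatorname{diag}(2, 1/2)$, and similarly in $Sp(2)^-$ to $M^-$. For $Sp(2)^+$: every element of trace $>2$ is conjugate to some $\operatorname{diag}(r, 1/r)$ with $r > 1$; I would first connect $A$ to such a diagonal matrix along a path keeping the trace above $2$ (e.g. scaling the off-diagonal/rotational part down while adjusting), then connect $\operatorname{diag}(r,1/r)$ to $\operatorname{diag}(2,1/2)$ by moving $r$ monotonically — trace stays $>2$ throughout. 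For $Sp(2)^-$ the region is larger and I would split into cases (elliptic, negative hyperbolic, parabolic with trace $<2$) and in each case write an explicit trace-decreasing-or-monotone path to $M^-$; the elliptic case can route through rotations $R(\theta)$ with $\theta \in (0,\pi) \cup (\pi, 2\pi)$, which all have trace $2\cos\theta < 2$, and one checks $\theta = \pi$ (i.e. $-I$, trace $-2$) lies in $Sp(2)^-$, so the elliptic piece is connected and connects to the hyperbolic-negative piece through $-I$.

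For simple connectedness I would use the rotation function $\rho$ from Theorem \ref{rotation}. On $Sp(2)^+$, property (3) of Theorem \ref{rotation} gives $\rho \equiv 1$ (no eigenvalues on the unit circle, and trace $>2$ forces the Krein-normalized eigenvalue to be the real one, giving $\rho = +1$); hence any loop in $Sp(2)^+$ is $\rho$-nullhomotopic in $S^1$, and since $\rho_*$ is an isomorphism on $\pi_1(Sp(2))$, the loop is nullhomotopic in $Sp(2)$ — but I need it nullhomotopic \emph{inside} $Sp(2)^+$, which follows once path-connectedness plus the fact that $Sp(2)^+$ is an open contractible-looking cell is established; concretely I would instead argue directly that $Sp(2)^+$ is homeomorphic to a convex/star-shaped set. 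In the coordinates $Sp(2)^* \cong \mathbb{R}^2 \times S^1$ (positive-definite symplectic symmetric part $\times$ $U(1)$-angle), the trace is a smooth function and one can check $Sp(2)^+$ is precisely the preimage of an arc/region over which the fiber structure trivializes; explicitly parametrizing $A$ via its positive-definite factor $P = e^S$ and rotation $R(\theta)$ and computing $\operatorname{tr}(PR(\theta)) > 2$ shows this set deformation retracts onto a point (e.g. onto $M^+$) by straight-line homotopy in $S$ combined with shrinking $\theta$; the same for $Sp(2)^-$ retracting onto $M^-$.

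The main obstacle I anticipate is the last point: verifying that $Sp(2)^\pm$, as subsets of $\mathbb{R}^2 \times S^1$ cut out by the trace inequality, are genuinely star-shaped or at least have trivial $\pi_1$ — the $S^1$ factor could in principle let a loop wind, and one must check the trace condition obstructs full winding (for $Sp(2)^+$ it clearly does, since $\rho \equiv 1$ pins the angle; for $Sp(2)^-$ the angle ranges over all of $S^1 \setminus \{0\}$ as $\theta$ varies through elliptic elements, so one must be careful that the $\theta = 0$ value is excluded and the set is an "interval times fiber" that is still simply connected). I would handle this by the explicit parametrization and an honest computation of the region $\{(\S, \theta) : \operatorname{tr}(e^{\S} R(\theta)) \gtrless 2\}$, showing it is contractible; this computation, while elementary, is the technical heart of the proof.
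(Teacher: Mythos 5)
Your first two moves track the paper's proof correctly: the trace description $Sp(2)^+ = \{\operatorname{tr} A > 2\}$, $Sp(2)^- = \{\operatorname{tr} A < 2\}$ is the same as the paper's eigenvalue characterization, and your normal-form path construction for connectedness is a more explicit version of what the paper only asserts as ``easy to see.'' Your $\rho$-argument for simple connectedness is also the paper's argument; for $Sp(2)^-$ you should just add that $\rho$ takes values in $S^1 \setminus \{1\}$, which is simply connected, so $\rho \circ \gamma$ is nullhomotopic there and hence $\gamma$ is nullhomotopic in $Sp(2)$.

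Where you go wrong is the step you flag as ``the technical heart.'' You interpret ``simply connected in $Sp(2)$'' as meaning that $Sp(2)^\pm$ are simply connected topological spaces, and propose to prove this by showing the region cut out by the trace inequality in $\mathbb{R}^2 \times S^1$ is contractible. This cannot succeed for $Sp(2)^+$, because $Sp(2)^+$ is \emph{not} simply connected. In the polar coordinates $A = e^S R(\theta)$ with $S = \begin{pmatrix} a & b \\ b & -a \end{pmatrix}$, a short computation gives $\operatorname{tr}(e^S R(\theta)) = 2\cosh(|S|)\cos\theta$, so $Sp(2)^+ = \{(S,\theta) : \cosh(|S|)\cos\theta > 1\}$. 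Shrinking $\theta$ linearly to $0$ only increases $\cos\theta$, so this set deformation retracts onto its $\theta = 0$ slice, which is $\{S \in \mathbb{R}^2 : \cosh|S| > 1\} = \mathbb{R}^2 \setminus \{0\}$. Hence $\pi_1(Sp(2)^+) \cong \mathbb{Z}$, and the explicit loop $\phi \mapsto R(\phi/2)\,e^{S_0}\,R(-\phi/2)$, $\phi \in [0,2\pi]$, with $S_0 = r_0\,\mathrm{diag}(1,-1)$, generates it. This loop is essential in $Sp(2)^+$ but nullhomotopic in $Sp(2)$ (its $\rho$-image is constantly $1$), which is exactly the phenomenon the theorem is recording.

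So ``simply connected in $Sp(2)$'' is a relative statement: every loop in $Sp(2)^\pm$ bounds a disk in $Sp(2)$, equivalently the inclusion $Sp(2)^\pm \hookrightarrow Sp(2)$ induces the zero map on $\pi_1$. This is the only property actually needed for the Conley--Zehnder index to be independent of the chosen extension $\tilde\gamma$, and your $\rho$-argument already proves it in full. The additional computation you planned is not a missing technical step but a false statement you would be unable to verify; recognizing that the theorem asserts the weaker, relative property closes the proof at the point where you already had it.
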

\begin{proof}
Elementary calculation shows that $A\in Sp(2)^+$ if and only if $A$ has a pair of positive real eigenvalues not equal to 1, and $A\in Sp(2)^-$ if and only if $A$ has a pair of negative real eigenvalues or a pair of complex conjugate eigenvalues on the unit circle not equal to 1. So the connectedness of $Sp(2)^{\pm}$ is easy to see. Let us show that $Sp(2)^{\pm}$ are simply conncected in $Sp(2)$. 

Let $\gamma: S^1\to Sp(2)^+$  be a continuous loop. Since the rotation function $\rho: Sp(2)^+\to S^1$ is the constant function $\rho\equiv1$, and $\rho$ induces an isomorphism of the fundamental groups, so any loop in $Sp(2)^+$ is contractible.

For $Sp(2)^-$, assume $A\in Sp(2)^-$ has eigenvalues $\lambda, \lambda^{-1}$ with $|\lambda|<1$ or $|\lambda|=1$ and $\lambda$ is Krein-positive. Then there is a well-defined number $\theta(A)$ so that $$\rho(A)=\frac{\lambda}{|\lambda|}=e^{i\theta(A)}$$ with $0<\theta(A)<2\pi$. Let $\gamma: S^1\to Sp(2)^-$  be a continuous loop, $\gamma$ is contractible if and only if $\rho\circ\gamma: S^1\to S^1$ is contractible. 
Since $$\rho(\gamma(t))=e^{i\theta(t)}$$
and $\theta(t): S^1\to (0, 2\pi)$, $\rho\circ\gamma$ is contractible.
\end{proof}

%%%%%%%%%%%%%%%%Definition of C_Z index%%%%%%%%%%%%%%%%%%%%%%%%
Now we are ready to define the Conley-Zehnder index. Given a path $\gamma\in\mathcal{P}^*(2)$, we can extend it to a continuous path $\tilde{\gamma}:[0,2T]\to Sp(2)$ such that 

\begin{itemize}
\item $\tilde{\gamma}|_{[0,T]}=\gamma$,
\item $\tilde{\gamma}(t)\in Sp(2)^\pm$ for $t\geq T$,
\item $\tilde{\gamma}(2T)\in\{M^+,M^-\}$.
\end{itemize}

that is, $\tilde{\gamma}$ coincides with $\gamma$ on the interval $[0,T]$, $\tilde{\gamma}(t)$ remains in $Sp(2)^\pm$ for all $t\geqslant T$, and $\tilde{\gamma}(2T)$ ends up at the  corresponding representative of the component. Note that $\rho(M^\pm)\in\{\pm 1\}$, thus $(\rho\circ\tilde{\gamma})^2:[0,2T]\to S^1$ is a loop. Here the square is understood as $S^1\to S^1: z\mapsto z^2$ in the complex plane.

\begin{definition}[Conley-Zehnder index]
Given the notation as above, the Conley-Zehnder index of a path $\gamma\in\mathcal{P}^*(2)$ is the degree of the map $(\rho\circ\tilde{\gamma})^2:[0,2T]\to S^1$. Denote the Conley-Zehnder index of $\gamma$ as $i_1(\gamma)$: $$i_1(\gamma)=\deg (\rho\circ\tilde{\gamma})^2$$
\end{definition}
Note that since the open sets $Sp(2)^+$ and $Sp(2)^-$ are connected and simply connected in $Sp(2)$, the Conley-Zehnder index of $\gamma$ does not depend on the choice of the extension $\tilde{\gamma}$.
Moreover, from the definition, we have the following proposition.
\begin{proposition}[Lemma 5.2.6, \cite{long}]
\label{evenodd}
Given the notation as above,
\begin{itemize}
\item [$\circ$]$i_1(\gamma)$ is even if and only if $\tilde{\gamma}(2T)=M^+$, \text{i.e.}\,$\gamma(T)\in Sp(2)^+$,
\item [$\circ$]$i_1(\gamma)$ is odd if and only if $\tilde{\gamma}(2T)=M^-$, \text{i.e.}\,$\gamma(T)\in Sp(2)^-$.
\end{itemize}
\end{proposition}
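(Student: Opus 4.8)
The plan is to convert the degree computation defining $i_1(\gamma)$ into a statement about the endpoint of a lift of $\rho\circ\tilde\gamma$ to the universal cover $\mathbb{R}$ of $S^1$, and then read off both assertions from the value $\rho(\tilde\gamma(2T))\in\{1,-1\}$, finally tying $\tilde\gamma(2T)$ back to $\gamma(T)$ via the component structure of $Sp(2)$.

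First I would observe that $\rho\circ\tilde\gamma:[0,2T]\to S^1$ is continuous (by Theorem \ref{rotation}(1) together with the continuity of $\tilde\gamma$) and that $\rho(\tilde\gamma(0))=\rho(I)=1$, since the double eigenvalue $1$ of $I$ contributes $\lambda/|\lambda|=1$. Lift this path to a continuous $\theta:[0,2T]\to\mathbb{R}$ with $\rho(\tilde\gamma(t))=e^{i\theta(t)}$ and $\theta(0)=0$. Because $\tilde\gamma(2T)\in\{M^+,M^-\}$ with $\rho(M^+)=1$ and $\rho(M^-)=-1$, we get $e^{i\theta(2T)}=\pm1$, hence $\theta(2T)=m\pi$ for a unique $m\in\mathbb{Z}$; and $\theta(2T)\in2\pi\mathbb{Z}$ holds exactly when $\rho(\tilde\gamma(2T))=1$, i.e. exactly when $\tilde\gamma(2T)=M^+$, while $\theta(2T)\in\pi+2\pi\mathbb{Z}$ holds exactly when $\tilde\gamma(2T)=M^-$.

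Next I would compute the degree: the squared map $(\rho\circ\tilde\gamma)^2(t)=e^{2i\theta(t)}$ is a loop (its value is $1$ at both $t=0$ and $t=2T$), and its degree is the total change of argument over $2\pi$, namely $\bigl(2\theta(2T)-2\theta(0)\bigr)/2\pi=\theta(2T)/\pi=m$. Hence $i_1(\gamma)=m$, and combining with the previous paragraph, $i_1(\gamma)$ is even iff $\theta(2T)\in2\pi\mathbb{Z}$ iff $\tilde\gamma(2T)=M^+$, and $i_1(\gamma)$ is odd iff $\tilde\gamma(2T)=M^-$.

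Finally I would identify $\tilde\gamma(2T)=M^\pm$ with $\gamma(T)\in Sp(2)^\pm$. By the construction of the extension, $\tilde\gamma(t)\in Sp(2)^+\cup Sp(2)^-$ for all $t\in[T,2T]$; since $Sp(2)^+$ and $Sp(2)^-$ are disjoint open subsets of $Sp(2)$ and $\tilde\gamma|_{[T,2T]}$ is continuous with connected domain, its image lies entirely inside the component containing $\gamma(T)=\tilde\gamma(T)$. As $M^+\in Sp(2)^+$ and $M^-\in Sp(2)^-$ (indeed Theorem \ref{con} guarantees these components are path connected, which is what makes such an extension meaningful), the endpoint $\tilde\gamma(2T)$ equals $M^+$ precisely when $\gamma(T)\in Sp(2)^+$ and $M^-$ precisely when $\gamma(T)\in Sp(2)^-$, which finishes the proof. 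The computation is mostly a routine unwinding of definitions; the one place requiring care is the bookkeeping of the lift $\theta$ and its endpoint, so I would make the normalizations $\theta(0)=0$ and $\rho(I)=1$ and the continuity of $\rho\circ\tilde\gamma$ explicit.
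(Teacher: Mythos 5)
Your proof is correct, and it is exactly the routine unwinding of the definition that the paper itself appeals to (the proposition is stated with "from the definition, we have" and a citation to Long's Lemma 5.2.6, with no written proof). The lift bookkeeping, the identification $i_1(\gamma)=\theta(2T)/\pi$, and the connectedness argument placing $\tilde\gamma([T,2T])$ in a single component of $Sp(2)^*$ are all sound.
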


\section{Conley-Zehnder index for periodic Hamiltonian system}

For the definition of Conley-Zehnder index for higher dimensional symplectic paths in $Sp(2n)$, we refer to \cite{alberto}, \cite{CZ1}, \cite{long}.
Now that we have Conley-Zehnder index for a symplectic path, we may wonder how do symplectic paths arise from dynamical system. It turns out that the fundamental solution of a periodic linear Hamiltonian system is a symplectic path. More precisely, a linear $T$-periodic Hamiltonian system in $\mathbb{R}^{2n}$ has the form:
$$\dot{Z}(t)=JS(t)Z(t)$$
where $S(t)$ is a $T$-periodic path of symmetric matrices. Let $\gamma(t)$ be the fundamental solution of the equation, then $\gamma(t)$ is symplectic for each $t$. 
More precisely, $\gamma(t)$ satisfies
$$\begin{cases}
\begin{aligned}
\dot{\gamma}(t)&=JS(t)\gamma(t)\\
\gamma(0)&=I
\end{aligned}
\end{cases}$$
so $$\frac{d}{dt}[\gamma(t)^TJ\gamma(t)]=\gamma(t)^TS(t)^TJ^TJ\gamma(t)+\gamma(t)^TJJS(t)\gamma(t)=0$$
and $\gamma(0)$ is symplectic, thus $\gamma(t)$ is symplectic for each $t$.

A periodic Hamiltonian system is called non-degenerate if 1 is not a Floquet multiplier. i.e. $\det(\gamma(T)-I)\neq 0$. We define the Conley-Zehnder index of a non-degenerate linear periodic Hamiltonian system to be the Conley-Zehnder index of its fundamental solution, i.e. $i_1(\gamma)$.

For a $T$-periodic solution $x(t)$ of a nonlinear $T$-periodic Hamiltonian system
$$\dot{x}(t)=JdH(x,t)$$
By Floquet theory, we can consider the linearized equation along this periodic solution:
$$\dot{Z}(t)=Jd^2H(x(t),t)Z(t)$$
then the linear stability of $x(t)$ can be obtained by the fundamental solution of the linearized equation.
Denote by $\gamma_x(t)$ the corresponding fundamental solution, the Conley-Zehnder index of $x(t)$ is then defined to be $i_1(\gamma_x)$. We say $x(t)$ is elliptic (hence linearly stable) if eigenvalues of $\gamma_x(T)$ are in $S^1\setminus \{\pm1\}$, and $x(t)$ is hyperbolic (hence linearly unstable) if eigenvalues of $\gamma_x(T)$ are in $\mathbb{R}\setminus \{\pm1\}$. 

%%%%%%%%%%%%%%%%%%%%%%%%%%%%%%%%%%%%%%%%%%%%%%%%%%%%%%%%%%%%%%%%%%%%%%%%%%%%%%%%%%%%%%%%%%%%%%%%%%%%%%%%%%%%%%%%%%%%%%%%%%%%%%%%%%%%%%%%%%%%%%%%%%%%%%%%%

Another application of Conley-Zehnder index is the Lagrangian system. For $T>0$, suppose $x$ is a critical point of the functional
$$F(x)=\int_0^TL(t, x, \dot{x})dt,\quad \forall x\in W$$
where $W=W^{1,2}(\mathbb{R}/T\mathbb{Z},\mathbb{R}^n)$ is a Sobolev space with the usual inner product
\[\langle x, y\rangle=\int_0^T(x\cdot y+\dot{x}\cdot \dot{y})dt\]
The Lagrangian $L\in C^2((\mathbb{R}/T\mathbb{Z})\times\mathbb{R}^{2n},\mathbb{R})$ and satisfies the Legendrian convexity condition $L_{pp}(t,x,p)>0$. It is known that $x$ will satisfy the Euler-Lagrangian equation:
$$\frac{d}{dt}L_p(t,x,\dot{x})-L_x(t,x,\dot{x})=0,$$
$$x(0)=x(T), \quad \dot{x}(0)=\dot{x}(T)$$
For such an extremal loop, define 
\begin{center}
$\begin{cases}
\begin{aligned}
P(t)&=L_{pp}(t, x(t),\dot{x}(t))\\
Q(t)&=L_{xp}(t, x(t),\dot{x}(t))\\
R(t)&=L_{xx}(t, x(t),\dot{x}(t))\\
\end{aligned}
\end{cases}$
\end{center}

Then the second variation of $F$ at $x$ is a symmetric bilinear form on $W$:
$$D^2F(x)\langle\alpha, \beta\rangle=\int_0^T\langle P\dot{\alpha}+Q\alpha,\dot{\beta}\rangle+\langle Q^T\dot{\alpha}, \beta\rangle+\langle R\alpha, \beta\rangle dt$$
where $\langle\cdot, \cdot\rangle$ is the inner product on $W$ and $\alpha, \beta\in W$. The \textbf{Morse index} of the extremal loop $x$, denoted by $m^-(x)$, is the total multiplicity of the negative eigenvalues of $D^2F(x)$. The Morse index is a finite integer when $L$ satisfies the Legendre convex condition (cf. \cite{duis}).

Moreover, the Euler-Lagrange equation corresponds to a Hamiltonian equation. The linearized Hamiltonian equation at $(x, D_pL(x, \dot{x}, t))$ is given by $$\dot{Z}(t)=JB(t)Z(t),$$
where $$B=\begin{pmatrix}P^{-1}(t)&-P^{-1}(t)Q(t)\\
-Q(t)^TP^{-1}(t)&Q(t)^TP^{-1}(t)Q(t)-R(t)
\end{pmatrix}$$
Let the fundamental solution of this Hamiltonian system be $\gamma_x$, then 

\begin{theorem}[C.Viterbo \cite{viterbo}, Y.Long and T.An \cite{longan}]
\label{czmorse}
Under the above conditions, $$m^-(x)=i_1(\gamma_x).$$
\end{theorem}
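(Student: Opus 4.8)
The plan is to reduce the statement to a purely linear one and then invoke the additivity of both indices under direct sums and concatenation. First I would observe that the Morse index $m^-(x)$ depends only on the quadratic form $D^2F(x)$, which in turn depends only on the path of coefficient matrices $P(t), Q(t), R(t)$ along the extremal loop; similarly $i_1(\gamma_x)$ depends only on the linearized symplectic path generated by $B(t)$. So the theorem is really a statement comparing two integers attached to the data $(P,Q,R)$, and we may forget the nonlinear Lagrangian entirely and work with an arbitrary $T$-periodic, Legendre-convex quadratic Lagrangian $L(t,x,p) = \tfrac12 P(t)p\cdot p + Q(t)x\cdot p + \tfrac12 R(t)x\cdot x$. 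The key formula to establish is that the nullity and signature of $D^2F(x)$ on $W = W^{1,2}(\mathbb{R}/T\mathbb{Z},\mathbb{R}^n)$ are computed by the symplectic path $\gamma_x$; for $n=1$ this is exactly the setting of Sections 2--3.

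The main tool I would use is a deformation/homotopy argument in the space of admissible data. Concretely: (i) connect $(P,Q,R)$ to a normalized endpoint, say $P \equiv I$, $Q \equiv 0$, and $R(t)$ small, through a path of Legendre-convex quadratic Lagrangians, keeping track of how $m^-$ and $i_1$ jump; (ii) show that both indices change by the same amount at each crossing, i.e. whenever an eigenvalue of $D^2F$ crosses $0$ the corresponding Floquet multiplier of $\gamma_x(T)$ crosses $1$, and the local contribution matches. Step (ii) is the crystallization of a \emph{crossing form} or \emph{spectral flow} computation: one differentiates the eigenvalue equation $-\tfrac{d}{dt}(P\dot\alpha + Q\alpha) + Q^T\dot\alpha + R\alpha = \mu \alpha$ along the deformation and relates the sign of $\dot\mu$ at $\mu=0$ to the direction in which $\rho(\gamma_x(T))$ winds, using the relation between $B(t)$ and $(P,Q,R)$ and the definition of the Conley-Zehnder index as the degree of $(\rho\circ\tilde\gamma)^2$. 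The parity statement of Proposition \ref{evenodd} ($i_1$ even iff $\gamma(T) \in Sp(2)^+$) provides a useful consistency check at the degenerate endpoints, where $\det(\gamma_x(T) - I) = 0$ corresponds precisely to $\ker D^2F(x) \neq 0$.

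Alternatively, and perhaps more cleanly for a self-contained one-degree-of-freedom account, I would set up a \textbf{Galerkin/finite-dimensional approximation} of $D^2F(x)$: restrict the quadratic form to the finite-dimensional subspaces $W_N \subset W$ spanned by Fourier modes $e^{2\pi i k t/T}$ with $|k| \le N$, compute the index and nullity of the resulting $(2N+1)\times(2N+1)$ matrices, and show that for $N$ large these stabilize to $m^-(x)$ while simultaneously equalling $i_1(\gamma_x)$ plus a correction term coming from the "constant" part of the operator (the classical trick, due to Conley--Zehnder and refined by Long--An, is that the quadratic form on $W_N$ restricted to the high modes is positive definite once $N$ exceeds a bound determined by $\|P^{-1}\|$). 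One then identifies $i_1(\gamma_x)$ with the Maslov-type index of the path of Lagrangian subspaces $\{(\text{graph of } \gamma_x(t))\}$ relative to the diagonal, and matches it against the stabilized Galerkin index.

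The hard part will be step (ii): pinning down the local sign, i.e. proving that the spectral flow of $D^2F$ through $0$ and the signed winding of $\rho \circ \gamma_x$ through $1$ agree not just in parity but exactly, with the right orientation conventions. This requires care with the identification between the Euler--Lagrange boundary-value problem and the linearized Hamiltonian flow (the Legendre transform $Z = (x, P\dot x + Q^T x)$, note the exact form of $B(t)$ above), and with the normalization of the Conley--Zehnder index via the representatives $M^\pm$ and the squared rotation function. Once the local model is computed correctly, the global equality follows by the homotopy invariance already available from Theorems \ref{rotation} and \ref{con}. I would therefore expect to cite \cite{longan} for the precise crossing computation rather than reproduce it, since it is a known and somewhat technical lemma, and present the deformation/stabilization skeleton in full.
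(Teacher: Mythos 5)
The paper does not prove Theorem \ref{czmorse} at all: it is stated as an attributed result of Viterbo and Long--An, with no proof environment following it, so there is nothing in the source to match your argument against. Judged on its own terms, your proposal correctly identifies the two standard routes to this theorem --- the spectral-flow/homotopy argument (deform the data $(P,Q,R)$ to a normalized endpoint and match the jumps of $m^-$ and $i_1$ at each degeneracy, using that $\ker D^2F(x)\neq 0$ iff $\det(\gamma_x(T)-I)=0$) and the Conley--Zehnder/Long--An Galerkin reduction onto finitely many Fourier modes. Both reductions are sound, and your observation that the statement is purely about the quadratic data along the loop is the right first step.

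The genuine gap is that the decisive step is never carried out, and you explicitly propose to cite \cite{longan} for it. Since the local crossing computation --- the proof that the signed spectral flow of $D^2F$ through $0$ equals the signed increment of $\deg(\rho\circ\tilde\gamma)^2$ under the normalizations of Section 2, not merely mod $2$ --- \emph{is} the content of the theorem of Viterbo and Long--An, deferring it to those references makes the argument circular as a proof and reduces it to the same attribution the paper already makes. To close the gap you would need, at minimum: (a) an explicit computation of both indices at the normalized endpoint (e.g.\ $P\equiv I$, $Q\equiv 0$, $R$ small and negative definite, where $m^-=0$ and $\gamma_x(T)\in Sp(2)^+$ forces $i_1$ even, and a short estimate pins $i_1=0$); and (b) the local model at a simple crossing, showing that a simple eigenvalue of $D^2F$ passing through $0$ with $\dot\mu<0$ corresponds to $\rho(\gamma_x(T))^2$ crossing $1$ in the positive direction, so that both integers increase by exactly $1$ (and by $2$ at a crossing where the multiplier passes through $1$ without leaving the circle). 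Without (b) the homotopy-invariance framework proves only that $m^--i_1$ is locally constant on the nondegenerate stratum, not that it vanishes.
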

That is, the Morse index of an extremal loop is equal to its Conley-Zehnder index. For more information on these topics, we refer to \cite{long}.

%%%%%%%%%%%%%%%%%%%%%%%%%%%%%%%%%%%%%%%%%%%%%%%%%%%%%%%%%%%%%%%%%%%%%%%%%%%%%%%%%%%%%%%%%%%%%%%%%%%%%%%%%%%%%%%%%%%%%%%%%%%%%%%%%%%%%%%%%%%%%%%%%%%%%%%%%

\section{Iteration formula of Conley-Zehnder index}
The iteration theory of Morse type indices is very important in the study of periodic solutions. We would like to introduce an interesting iteration formula of the Conley-Zehnder index. First we need a generalization of the Conley-Zehnder index, i.e. the $\omega$-index, which was first introduced by Long \cite{long99} for every $\omega\in \C$ with modulus equal to 1. 

Given $A\in Sp(2)$, let $D(\omega)=\bar{\omega}\det(A-\omega I)$, then one can show that $D(\omega)$ is real-valued. We can divide $Sp(2)$ into three components for each $\omega\in S^1$:
$$Sp(2)_{\omega}^{\pm}=\{A\in Sp(2)|\pm D(\omega)<0\}$$
$$Sp(2)_{\omega}^{0}=\{A\in Sp(2)| D(\omega)=0\}$$

\begin{theorem}[Long \cite{long1999}]
The open sets $Sp(2)_{\omega}^{\pm}$ are connected and simply conncected in $Sp(2)$.
\end{theorem}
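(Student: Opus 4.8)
The plan is to mimic the strategy already used in the proof of Theorem \ref{con} (the case $\omega=1$), replacing the function $\det(A-I)$ by the quantity $D(\omega)=\bar\omega\det(A-\omega I)$ and the rotation function $\rho$ by a suitable $\omega$-adapted splitting. First I would record the elementary linear algebra: for $A\in Sp(2)$ with eigenvalues $\lambda,\lambda^{-1}$, one has $\det(A-\omega I)=(\lambda-\omega)(\lambda^{-1}-\omega)=1+\omega^2-\omega(\lambda+\lambda^{-1})$, so $D(\omega)=\bar\omega\det(A-\omega I)=\bar\omega+\omega-(\lambda+\lambda^{-1})=2\,\mathrm{Re}(\omega)-\mathrm{tr}(A)$, which is manifestly real; this both justifies the definition of the three sets and shows that $Sp(2)_\omega^0$ is exactly the set of symplectic matrices having $\omega$ or $\bar\omega$ as an eigenvalue. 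Writing $\omega=e^{i\phi}$, the condition $D(\omega)<0$ reads $\mathrm{tr}(A)>2\cos\phi$ and $D(\omega)>0$ reads $\mathrm{tr}(A)<2\cos\phi$. Thus $Sp(2)_\omega^\pm$ are distinguished purely by the trace lying on one side of the level $2\cos\phi$, and I can now analyze each set by eigenvalue type exactly as in Theorem \ref{con}.

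Next I would establish path-connectedness. Using the trace description, $Sp(2)_\omega^-=\{\mathrm{tr}(A)<2\cos\phi\}$; when $\phi\in(0,\pi)$ this region contains matrices with a complex-conjugate pair on the unit circle (those with $|\mathrm{tr}|<2$) as well as matrices with a negative real pair (those with $\mathrm{tr}\le-2$), and any two such matrices can be joined by a path along which the trace stays below $2\cos\phi$; a convenient way to see this is to conjugate into a normal form and connect within the one- or two-parameter families of fixed-trace matrices, then move the trace monotonically. The set $Sp(2)_\omega^+=\{\mathrm{tr}(A)>2\cos\phi\}$ is handled symmetrically; note that for $\phi$ near $0$ or $\pi$ one of the two pieces degenerates toward the $\omega=1$ or $\omega=-1$ picture, so the argument of Theorem \ref{con} is literally the endpoint case. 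The key point is that $Sp(2)$ is connected (it is homeomorphic to $\mathbb{R}^2\times S^1$ by the first Proposition) and removing the closed hypersurface $Sp(2)_\omega^0=\{\mathrm{tr}=2\cos\phi\}$ leaves exactly two connected open pieces, because the trace is a submersion off the eigenvalue $\pm 1$ locus and the level set is connected.

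For simple connectivity I would reproduce the rotation-function argument. On $Sp(2)^*$ the rotation function $\rho$ of Theorem \ref{rotation} is defined and continuous; I would check that on each component $Sp(2)_\omega^\pm$ the composite $\rho$ takes values in a proper arc of $S^1$ — concretely, on the elliptic part the eigenvalue $\lambda=e^{i\theta}$ is confined by the trace constraint $2\cos\theta=\mathrm{tr}(A)$ lying on one side of $2\cos\phi$, so $\theta$ ranges over an open arc not all of $(0,2\pi)$, while on the hyperbolic part $\rho\equiv\pm1$ is constant. Hence for any loop $\gamma:S^1\to Sp(2)_\omega^\pm$, the loop $\rho\circ\gamma$ lands in a contractible arc and is therefore null-homotopic in $S^1$; since $\rho$ induces an isomorphism on fundamental groups (Theorem \ref{rotation}(5)) and $\pi_1(Sp(2))=\mathbb{Z}$ is detected by $\rho$, the loop $\gamma$ is contractible in $Sp(2)$. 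This gives simple connectivity of each $Sp(2)_\omega^\pm$ in $Sp(2)$.

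I expect the main obstacle to be the connectedness bookkeeping for intermediate $\omega$: when $\cos\phi\in(-1,1)$, the piece $Sp(2)_\omega^-$ mixes genuinely different eigenvalue regimes (an elliptic arc glued to a real-saddle region through the parabolic boundary $\mathrm{tr}=-2$), and one must verify carefully that these do not split into separate components and that a connecting path can be kept strictly below the level $2\cos\phi$ — in particular handling the passage through the double eigenvalue $-1$, where $\rho$ may be ill-defined and one must argue by continuity of the trace rather than of $\rho$. Once that case analysis is organized by the single real parameter $\mathrm{tr}(A)$, the rest is a routine transcription of the $\omega=1$ proof.
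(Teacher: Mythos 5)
Your proposal takes essentially the same route as the paper intends: the paper's entire proof of this theorem is the sentence ``The proof is similar to Theorem \ref{con},'' and you are carrying that out by transcribing the $\omega=1$ argument. The observation $D(\omega)=\bar\omega\det(A-\omega I)=2\cos\phi-\mathrm{tr}(A)$ is a clean and correct way to package the eigenvalue characterization that the proof of Theorem \ref{con} states by cases, and your simple-connectedness argument --- that $\rho$ maps $Sp(2)_\omega^+$ into the arc $\{e^{i\theta}:|\theta|<\phi\}$ and $Sp(2)_\omega^-$ into the complementary arc $\{e^{i\theta}:\phi<\theta<2\pi-\phi\}$, so any loop pushes to a nullhomotopic loop in $S^1$ --- is exactly the $\omega=1$ argument extended, and it is sound (the Krein-positive eigenvalue of an elliptic matrix in $Sp(2)_\omega^\pm$ lands in the stated arc regardless of which of $e^{\pm i\theta}$ is Krein-positive, and parabolic/hyperbolic points contribute only $\pm1$, which lie in the correct arcs).

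One caveat on connectedness: the sentence ``removing the closed hypersurface $\{\mathrm{tr}=2\cos\phi\}$ leaves exactly two connected open pieces, because the trace is a submersion off the $\pm I$ locus and the level set is connected'' is not a valid shortcut --- a connected regular level set of a real-valued function separates a connected manifold into \emph{at least} two pieces, but nothing in that statement bounds the number of components of $\{\mathrm{tr}>2\cos\phi\}$ or $\{\mathrm{tr}<2\cos\phi\}$ from above. The actual content is the normal-form path argument you sketch just before it: every $A$ in $Sp(2)_\omega^+$ (resp.\ $Sp(2)_\omega^-$) is symplectically conjugate to $\mathrm{diag}(\mu,\mu^{-1})$ with $\mu>0$ or to a rotation $R(\pm\theta)$ with $|\theta|<\phi$ (resp.\ to $-\mathrm{diag}(\mu,\mu^{-1})$ or $R(\pm\theta)$ with $\phi<\theta<\pi$), the normal-form parameter can be slid to $M^\pm$ without leaving the trace half-space, and the conjugating element can be contracted to $I$ inside the connected group $Sp(2)$. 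That is what the paper is silently invoking when it says connectedness ``is easy to see,'' so with that replacement your proof matches the intended one.
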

 The proof is similar to Theorem \ref{con}. Moreover, the two representatives $M^{\pm}$, where
 $$M^+=\begin{pmatrix}2&0\\0&\frac{1}{2}\end{pmatrix}, \quad M^-=\begin{pmatrix}-2&0\\0&-\frac{1}{2}\end{pmatrix},$$
 work for $Sp(2)_{\omega}^{\pm}$, and $M^+\in Sp(2)_{\omega}^{+}$, $M^-\in Sp(2)_{\omega}^{-}$.
 
 When $\omega=1$, these components coincide with $Sp(2)^{\pm}, Sp(2)^0$ as in the previous section. The Conley-Zehnder index $i_1(\gamma)$ for a symplectic path $\gamma$ can be viewed as detecting the change of eigenvalues of $\gamma$ at $1$. Similarly, we can detecting the eigenvalues of $\gamma$ at other points of the unit circle, say at $\omega\in S^1$.

Let $$\mathcal{P}_{\omega}^*(2):=\{\gamma\in \mathcal{P}(2)|\det(\gamma(T)-\omega I)\neq 0\}$$ be the set of $\omega-$nondegenerate symplectic paths in $Sp(2)$. If $\gamma$ is an $\omega-$nondegenerate symplectic path, then we can extend it to a continuous path $\tilde{\gamma}:[0,2T]\to Sp(2)$ such that 

\begin{itemize}
\item $\tilde{\gamma}|_{[0,T]}=\gamma$,
\item $\tilde{\gamma}(t)\in Sp(2)_{\omega}^\pm$ for $t\geq T$,
\item $\tilde{\gamma}(2T)\in\{M^+,M^-\}$.
\end{itemize}

that is, $\tilde{\gamma}$ coincides with $\gamma$ on the interval $[0,T]$, $\tilde{\gamma}(t)$ remains in $Sp(2)_{\omega}^\pm$ for all $t\geqslant T$ , whichever component $\gamma(T)$ lies in, and $\tilde{\gamma}(2T)$ ends up at the  corresponding representative of the component. Note that $\rho(M^\pm)\in\{\pm 1\}$, thus $(\rho\circ\tilde{\gamma})^2:[0,2T]\to S^1$ is a loop. 

\begin{definition}[$\omega$-index, \cite{long99}]
\label{omega_index}
Given the notation as above, the $\omega$-index of a path $\gamma\in\mathcal{P}_{\omega}^*(2)$ is the degree of the map $(\rho\circ\tilde{\gamma})^2:[0,2T]\to S^1$. Denote the $\omega$-index of $\gamma$ as $i_\omega(\gamma)$: $$i_\omega(\gamma)=\deg (\rho\circ\tilde{\gamma})^2$$
\end{definition}
Note that since the open sets $Sp(2)_{\omega}^+$ and $Sp(2)_{\omega}^-$ are connected and simply connected in $Sp(2)$, the $\omega$-index of $\gamma$ does not depend on the choice of the extension $\tilde{\gamma}$.

For $T>0$, let $x: \mathbb{R}/T\Z\to \R^2$ be a $T$-periodic solution of the time-$T$ periodic Hamiltonian system
$$\dot{x}=JdH(x,t)$$ 
We define the $m$-th iteration of $x$ by
$$x^m(t)=x(t-jT),~~\forall jT\leq t\leq (j+1)T, j=0,1,\cdots,m-1$$ 
Then $x^m$ is an $mT$-periodic solution of the Hamiltonian system. Geometrically it is the same as $x$. Let $$\dot{Z}(t)=Jd^2H(x(t),t)Z(t)$$
be the linearized Hamiltonian system at $x$, and let $\gamma(t)$ be the corresponding fundamental solution. Define the $m$-th iteration of $\gamma$ by
$$\gamma^m(t)=\gamma(t-jT)\gamma(T)^j, \quad\forall\,\, jT\leq t\leq (j+1)T, \quad j=0,1, \cdots, m-1$$
The $m$-th iteration of any symplectic path will be defined in this way.
By the uniqueness of initial value problem of linear ODE, we know $\gamma^m$ is the fundamental solution corresponding to $x^m$.

\begin{theorem}[Bott-type iteration formula, \cite{long99}] \label{iteration}
For any $\gamma\in \mathcal{P}(2)$, $z\in S^1$ and $m\in\N$,
$$i_z(\gamma^m)=\sum\limits_{\omega^m=z}i_\omega(\gamma)$$
\end{theorem}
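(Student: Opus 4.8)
The plan is to reduce the iteration formula to a statement about degrees of maps into $S^1$, exploiting the fact that the $\omega$-index is, by Definition \ref{omega_index}, the degree of $(\rho\circ\tilde{\gamma})^2$ for a suitable extension $\tilde{\gamma}$. The key observation is that the rotation function $\rho$ almost commutes with iteration: if $\gamma^m$ is the $m$-th iterate, then for each $t\in[jT,(j+1)T]$ one has $\gamma^m(t)=\gamma(t-jT)\gamma(T)^j$, and since $\rho$ is symplectically invariant (Theorem \ref{rotation}(2)) and multiplicative on the unitary part, the path $\rho\circ\gamma^m$ should be expressible in terms of $\rho\circ\gamma$ concatenated with itself $m$ times, up to the rotation contributed by the eigenvalue located at $\gamma(T)$. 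More precisely, writing $\rho(\gamma(T))=e^{i\theta_0}$ (with a definite branch determined by the Krein-positive/small-modulus convention), one checks that $\rho(\gamma^m(t))=e^{ij\theta_0}\rho(\gamma(t-jT))$ on the $j$-th block when the eigenvalues stay on the unit circle, and an analogous bookkeeping holds in the hyperbolic components where $\rho\equiv\pm1$.

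First I would set up, for the given $\gamma\in\mathcal{P}(2)$ and $z\in S^1$, an extension $\tilde{\gamma}$ of $\gamma$ on $[0,2T]$ realizing $i_z(\gamma^m)$ — but done simultaneously for all the $m$-th roots $\omega$ with $\omega^m=z$: choose one extension of $\gamma$ past each such $\omega$, i.e.\ compute $i_\omega(\gamma)$ via $\tilde{\gamma}_\omega$ staying in $Sp(2)_\omega^\pm$ and ending at $M^\pm$. Then I would build an explicit extension of $\gamma^m$ on $[0,2mT]$ by concatenating the $m$ iterate-blocks of $\gamma$ followed by a single tail extension; the point is that $\gamma^m(mT)=\gamma(T)^m$, and $\det(\gamma(T)^m - zI)\neq 0$ is equivalent to $\det(\gamma(T)-\omega I)\neq 0$ for every $m$-th root $\omega$ of $z$, so $z$-nondegeneracy of $\gamma^m$ is exactly the joint $\omega$-nondegeneracy of $\gamma$. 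Because the components $Sp(2)_z^\pm$ are connected and simply connected (Theorem quoted from Long), the resulting degree is independent of the chosen extension, so I am free to pick the most convenient one.

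The computational heart is then a degree count: $i_z(\gamma^m)=\deg(\rho\circ\widetilde{\gamma^m})^2$, and I would split the loop $(\rho\circ\widetilde{\gamma^m})^2$ into the $m$ consecutive sub-arcs coming from the iterate-blocks plus the tail. On the $j$-th block the argument increment of $(\rho\circ\gamma^m)^2$ equals the increment of $(\rho\circ\gamma)^2$ along $[0,T]$ plus a fixed contribution $2\theta_0$ coming from the accumulated factor $e^{ij\theta_0}$ transitioning to $e^{i(j+1)\theta_0}$; summing $j=0,\dots,m-1$ and folding in the tail, the total degree reorganizes into $\sum_{\omega^m=z} i_\omega(\gamma)$, because each $\omega=e^{i(\arg z + 2\pi k)/m}$ picks out exactly the piece of the winding that "lands at $\omega$" after $m$ steps. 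The clean way to see this is: the map $z\mapsto z^{1/m}$ lifts the endpoint data, and the degree of a loop composed with the $m$-fold covering $S^1\to S^1$ decomposes additively over the $m$ sheets — this is really the Bott formula $\sum_{\omega^m=z}$ appearing as the trace of an $m$-fold cover, exactly as in the original Bott iteration formula for closed geodesics.

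The main obstacle I anticipate is the careful treatment of the branch of $\rho$ and of the extension near the degenerate strata: $\rho$ is only defined with a consistent argument once one fixes the Krein-positive eigenvalue (or the one of modulus $<1$), and along the iterated path the eigenvalue may cross $\pm1$, where the Krein convention flips and $\rho$ passes through $\pm1$. Handling these crossings — showing that the "accumulated angle" bookkeeping $\rho(\gamma^m(t)) = e^{ij\theta_0}\rho(\gamma(t-jT))$ survives passage through $Sp(2)^0_\omega$ after the homotopy into $Sp(2)^\pm$, and that the tail extension for $\gamma^m$ can be chosen compatibly with the $m$ tail extensions for the individual $\omega$-indices — is where the real work lies. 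I would manage this by working entirely at the level of the squared map $(\rho\circ\tilde\gamma)^2$, which is a genuine loop in $S^1$ and is insensitive to the sign ambiguity, and by invoking the simple-connectedness of $Sp(2)^\pm_\omega$ to normalize all tails to the standard representatives $M^\pm$; the parity statement of Proposition \ref{evenodd} then guarantees the endpoint blocks match up mod $2$, which is enough to pin down the degree.
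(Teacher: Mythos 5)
The paper does not actually prove Theorem \ref{iteration}; it is quoted from Long \cite{long99}. So the question is whether your sketch could be completed into a proof, and I do not think it can in its present form: the identity at its core is false.

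The step that fails is the claimed bookkeeping $\rho(\gamma^m(t))=e^{ij\theta_0}\rho(\gamma(t-jT))$ on the $j$-th block. This presupposes that $\rho$ is (near-)multiplicative, i.e.\ that $\rho(AB)$ is controlled by $\rho(A)\rho(B)$. It is not: $\rho$ is only a quasimorphism on $Sp(2)$, not a homomorphism, and it is multiplicative only on the unitary subgroup. For instance, a product of two matrices in $Sp(2)^+$ (each with $\rho=1$, real positive eigenvalues) can be elliptic with $\rho(AB)=e^{i\alpha}$ for essentially any $\alpha$; conversely $\gamma(t-jT)\gamma(T)^j$ can leave and re-enter the elliptic region in ways invisible to the pointwise values of $\rho$ on the factors. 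Since the entire degree count in your third paragraph rests on this block decomposition, the argument does not go through. The remark that ``the degree of a loop composed with the $m$-fold covering decomposes additively over the sheets'' is also not applicable: the iteration $\gamma\mapsto\gamma^m$ is not composition of $\rho\circ\gamma$ with $z\mapsto z^m$, and the relation between the winding of $\rho\circ\gamma^m$ and the numbers $i_\omega(\gamma)$ is exactly the content of the theorem, not something one may assume.

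What a correct proof needs, and what Long's argument supplies, is one of two genuinely different inputs. Either (i) a homotopy-invariance argument: both sides of the formula are invariant under homotopies of $\gamma$ with fixed ($\omega$-nondegenerate) endpoints, so one may deform $\gamma$ to a standard path ending at a normal form ($R(\theta)$, $M^{\pm}$, or a parabolic block) and verify the identity by explicit computation there --- this is where the roots of unity actually enter, through $\det(R(\theta)^m-zI)=0$ iff $e^{i\theta}$ is an $m$-th root of $z$; or (ii) Bott's original functional-analytic route: the Hessian over $[0,mT]$ with $z$-twisted boundary conditions splits, under the character decomposition of the $\Z_m$-action, into the direct sum of the $\omega$-boundary-condition Hessians on $[0,T]$ for $\omega^m=z$, and one then invokes the index-equals-Morse-index theorem (Theorem \ref{czmorse} and its $\omega$-versions). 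Your set-up of the extensions $\tilde\gamma_\omega$ and the observation that $z$-nondegeneracy of $\gamma^m$ is joint $\omega$-nondegeneracy of $\gamma$ are correct and would be part of either proof, but neither of the two essential mechanisms above is present in the sketch.
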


In particular, when $z=1$, we have $$i_1(\gamma^m)=\sum\limits_{\omega^m=1}i_\omega(\gamma)$$
that is, the Conley-Zehnder index of the $m$-th iterate, $\gamma^m$, is equal to the sum of the $\omega$-index of $\gamma$, where $\omega$ are the $m$-th roots of unity.

Now we can state our main theorem. A non-degenerate 2-dimensional periodic solution is elliptic if and only if the Conley-Zehnder index of its double cover is odd.

\begin{theorem}[Stability via double cover]
\label{thm_dc}
Let $x:\R/T\Z\to \R^2$ be a $T$-periodic solution of a periodic Hamiltonian system. Suppose $x$ and $x^2$ are non-degenerate, then 
$x$ is elliptic if and only if $i_1(x^2)$ is odd, equivalently, $x$ is hyperbolic if and only if $i_1(x^2)$ is even.
\end{theorem}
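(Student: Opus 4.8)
The plan is to express $i_1(\gamma^2)$ in terms of the $\omega$-indices of $\gamma$ via the Bott-type iteration formula (Theorem \ref{iteration}) with $z=1$ and $m=2$, and then use the parity criterion of Proposition \ref{evenodd} applied to $\gamma^2$. Concretely, the second roots of unity are $\omega=1$ and $\omega=-1$, so
\[
i_1(\gamma^2)=i_1(\gamma)+i_{-1}(\gamma).
\]
By Proposition \ref{evenodd} applied to the path $\gamma^2$, the integer $i_1(\gamma^2)$ is odd if and only if $\gamma^2(2T)=\gamma(T)^2\in Sp(2)^-$, i.e. if and only if $\det(\gamma(T)^2-I)<0$. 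So the whole theorem reduces to the purely linear-algebraic statement: for $A=\gamma(T)\in Sp(2)$ with $A$ and $A^2$ both non-degenerate (meaning $\det(A-I)\neq 0$ and $\det(A^2-I)\neq 0$, the latter also excluding $-1$ as an eigenvalue), one has $\det(A^2-I)<0$ if and only if $A$ is elliptic, i.e. has eigenvalues in $S^1\setminus\{\pm1\}$.

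First I would establish this eigenvalue dichotomy. Write the eigenvalues of $A$ as $\lambda,\lambda^{-1}$. Then $\det(A^2-I)=(\lambda^2-1)(\lambda^{-2}-1)=2-\lambda^2-\lambda^{-2}=-(\lambda-\lambda^{-1})^2$. If $A$ is hyperbolic, $\lambda\in\R\setminus\{\pm1\}$, so $\lambda-\lambda^{-1}$ is a nonzero real number and $\det(A^2-I)=-(\lambda-\lambda^{-1})^2<0$ — wait, this needs care: I should instead compute $\det(A^2-I)$ directly. Since $A\in Sp(2)=SL(2,\R)$, $\operatorname{tr}(A^2)=(\operatorname{tr}A)^2-2$, hence $\det(A^2-I)=2-\operatorname{tr}(A^2)=2-(\operatorname{tr}A)^2+2=4-(\operatorname{tr}A)^2=(2-\operatorname{tr}A)(2+\operatorname{tr}A)$. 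Now $A$ is elliptic iff $|\operatorname{tr}A|<2$, in which case $(2-\operatorname{tr}A)(2+\operatorname{tr}A)>0$; and $A$ is hyperbolic iff $|\operatorname{tr}A|>2$, in which case the product is $<0$. (The degeneracy hypotheses on $x$ and $x^2$ precisely exclude $\operatorname{tr}A=\pm2$, so these are the only two cases.) This gives the sign computation cleanly and avoids any branch issues.

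Then I would assemble the argument: $x$ elliptic $\iff$ $|\operatorname{tr}\gamma(T)|<2$ $\iff$ $\det(\gamma(T)^2-I)>0$ $\iff$ $\gamma(T)^2\in Sp(2)^+$ $\iff$ (by Proposition \ref{evenodd} applied to $\gamma^2$) $i_1(\gamma^2)$ is even — and I realize I have the parity backwards from the statement, so let me double-check: the statement claims $x$ elliptic $\iff$ $i_1(x^2)$ odd. By Proposition \ref{evenodd}, $i_1(\gamma^2)$ is odd iff $\gamma(T)^2\in Sp(2)^-$ iff $\det(\gamma(T)^2-I)<0$ iff $|\operatorname{tr}\gamma(T)|>2$ iff $x$ is hyperbolic. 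So in fact the theorem as I'm deriving it reads "$x$ hyperbolic $\iff i_1(x^2)$ odd," which is the opposite of the stated claim; I would need to recheck the sign convention in $Sp(2)^\pm=\{\pm\det(A-I)<0\}$ as applied to $A^2$, since $\det(A^2-I)=\det(A-I)\det(A+I)$ and the factor $\det(A+I)=2+\operatorname{tr}A$ can be negative. This interplay of the two sign factors is the main obstacle: the correct statement and its proof hinge on tracking $\operatorname{sign}\det(A^2-I)$ as the product $\operatorname{sign}(2-\operatorname{tr}A)\cdot\operatorname{sign}(2+\operatorname{tr}A)$, and one must also confirm that the Bott formula's right-hand side $i_1(\gamma)+i_{-1}(\gamma)$ has the claimed parity — equivalently that $i_{-1}(\gamma)$ is even iff $\gamma(T)\in Sp(2)_{-1}^+$ iff $\det(\gamma(T)+I)>0$, i.e. $\operatorname{tr}\gamma(T)>-2$, which always holds when $|\operatorname{tr}\gamma(T)|<2$. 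Reconciling these parities correctly — and in particular getting the normalization of $Sp(2)_\omega^\pm$ for $\omega=-1$ right — is where the real care is needed; the rest is the elementary trace computation above.
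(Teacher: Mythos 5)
Your overall strategy --- reduce the question to which of $Sp(2)^+$ or $Sp(2)^-$ contains $\gamma^2(2T)=\gamma(T)^2$, then invoke Proposition \ref{evenodd} --- is exactly what the paper does. The Bott-formula detour in your opening paragraph is unnecessary (and you correctly abandon it); Proposition \ref{evenodd} applied directly to the path $\gamma^2$ is all that is needed. The paper carries out the linear-algebra step by casework on the location of the eigenvalues $\lambda^2,\lambda^{-2}$ of $\gamma(T)^2$; your trace identity $\det(A^2-I)=(2-\operatorname{tr}A)(2+\operatorname{tr}A)=4-(\operatorname{tr}A)^2$ is a cleaner and arguably tidier way to run the same computation.

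The apparent sign contradiction you run into is not a real feature of the problem: it comes from misreading the convention $Sp(2)^\pm=\{A:\pm\det(A-I)<0\}$. Unpacked, $Sp(2)^+=\{A:\det(A-I)<0\}$ and $Sp(2)^-=\{A:\det(A-I)>0\}$ --- the opposite of what you use in both of your attempted assemblies. (A quick sanity check against the characterization in the proof of Theorem \ref{con}: elliptic eigenvalues $e^{\pm i\theta}$ give $\det(A-I)=2-2\cos\theta>0$, and the paper puts such $A$ in $Sp(2)^-$.) With the convention read correctly everything lines up with no further detective work on $\det(A+I)$: $x$ elliptic $\iff |\operatorname{tr}\gamma(T)|<2 \iff \det(\gamma(T)^2-I)=4-(\operatorname{tr}\gamma(T))^2>0 \iff \gamma(T)^2\in Sp(2)^-$, and by Proposition \ref{evenodd} applied to $\gamma^2$ this is exactly $i_1(\gamma^2)$ odd, as the theorem states. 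The non-degeneracy of $x$ and $x^2$ rules out $\operatorname{tr}\gamma(T)=\pm 2$, so the elliptic/hyperbolic dichotomy is exhaustive. As written the proposal does not close, but the fix is a one-line sign correction, after which your argument is correct and coincides with the paper's up to the trace-versus-eigenvalue bookkeeping.
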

\begin{proof}
Let $\gamma$ be the fundamental solution of the linearized Hamiltonian system at $x$, then $\gamma^2$ is the corresponding fundamental solution for $x^2$, and we have $\gamma^2(2T)=\gamma(T)^2$. Let $\lambda, \lambda^{-1}$ be the eigenvalues of $\gamma(T)$, then $\lambda^2, \lambda^{-2}$ are the eigenvalues of $\gamma^2(2T)$.

If $i_1(x^2)=i_1(\gamma^2)$ is odd, by Proposition \ref{evenodd} the monodromy matrix $\gamma^2(2T)$ is in $Sp(2)^-$. So the eigenvalues of $\gamma^2(2T)$ must be negative real reciprocals or a pair of complex conjugate numbers on the unit circle not equal to 1. Since the eigenvalues of $\gamma^2(2T)$ must be of the form $\{\lambda^2, \lambda^{-2}\}$, they cannot be negative real reciprocals, so $\lambda^2, \lambda^{-2}$ must be on the unit circle. Hence $\lambda, \lambda^{-1}$ are on the unit circle, and $x$ is elliptic.

On the other hand, if $x$ is elliptic, then $\lambda, \lambda^{-1}$ are on the unit circle, so are $\lambda^2, \lambda^{-2}$. Therefore $\gamma^2(2T)$ is in $Sp(2)^-$, and $i_1(x^2)=i_1(\gamma^2)$ is odd.
\end{proof}

\section{applications}
In this section we will discuss various applications of the results given in earlier sections. We will use the notation $i_2(x)=i_1(x^2)$ to denote the Conley-Zehnder index of the double cover of $x$ in our applications. 

\subsection{Mathieu equation}
We begin with a famous example, the Mathieu equation, which we will use as an illustrative example
\[ \ddot q + \left ( \omega^2 + \epsilon \cos ( 2t ) \right ) q = 0  \]
Here the solutions depend on parameters $\omega, \epsilon$ and the system is Hamiltonian with $\pi$ periodic Hamiltonian  \[H(q,p, \omega, \epsilon, t) = \frac{1}{2} p^2 + \frac{1}{2} \left ( \omega^2 +  \epsilon \cos ( 2t ) \right ) q^2. \]
We remark here that what we will describe here for the Mathieu equation has similar formulations  for any time periodic system of Hamiltonian type with one degree of freedom (also called one and one half degrees of freedom). The fundamental matrix solution $A= A(t,\omega, \epsilon) \in Sp(2)$ undergoes changes in stability type, and also in the values of the Conley-Zehnder indices $i_1$ and $i_2$ as the parameters $\omega, \epsilon$ change.  We will explain how the classical results on stability-instability for this system can be recast in terms of the Conley-Zehnder indices. First of all, the monodromy matrix $A(\pi, \omega, \epsilon)$ determines the stability of the system, and in particular the eigenvalues  $\lambda_1, \lambda_2$ of A,  which are reciprocal in the complex plane. The following well known stability diagram (Figure \ref{fig:stability}), describes the regions of stability and instability in the $\omega^2,\epsilon$ parameter plane. The main feature for our purposes, will be the stability curves which cross on the $\omega^2$ axis, and such crossings, when $\epsilon=0$,  correspond to the classical eigenvalues $\mu_n = n^2$ of the corresponding Sturm-Liouville self adjoint operator on $L^2[0, \pi]$, for $\pi$ periodic and $\pi$ anti-periodic boundary conditions. Classically, such curves are determined by power series methods (in powers of $\epsilon$) for $\pi$  periodic or $\pi$ anti-periodic solutions. The transitions between stability and instability for the Mathieu system, are indicated by the stability curves. Crossing one or several of these curves will change the stability of the system. The stable domains are the open regions including intervals on the $\omega^2$ axis. The domains of instability bifurcate from the eigenvalues $\mu_n = n^2$ on the $\omega^2$ axis.

%-------------------------------------------------------------------------------------------------
\begin{figure}[h] 
\begin{center}
\includegraphics[scale=.7]{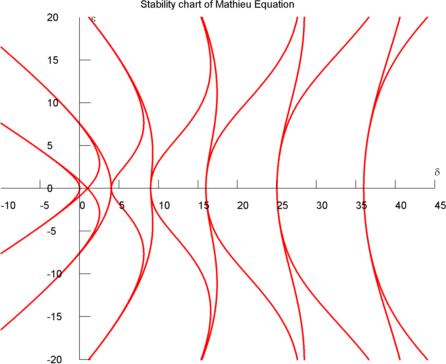}
\caption{Bifurcation of stability and instability domains in $\omega^2=\delta,\epsilon$ plane.}\label{fig:stability}
\end{center}
\end{figure}
%------------------------------------------------------------------------------------------------- 

There is a direct way to see how the mapping $\rho: Sp(2) \rightarrow S^1$ can be visualized. We introduce a mapping $Sp(2) \times S^1 \rightarrow S^1$ using angular coordinate $\theta$ in the projective space $P^1(\mathbb R)$ which is just the angle $\theta$ from polar coordinates in the $p, q$ plane. The  period mapping $A(\pi,\omega,\epsilon)$  for the Hamiltonian $H(q,p, \omega, \epsilon, t)$ gives an element of $Sp(2)$  and  the projective point $(p(0),q(0))$  an element of $S^1=P^1(\mathbb R)$. The mapping that we consider is then 
\[ Sp(2) \times S^1 \rightarrow S^1, \qquad  ( A(\pi,\omega,\epsilon), \theta(0) ) \mapsto \theta(\pi) \] The angle $\theta$ can easily be seen to satisfy the relation
\[ \dot {\theta}  = \frac{2H(q(t),p(t), t, \omega, \epsilon)}{p^2 + q^2} \] 
where $(p(t), q(t))$ is a solution to the Mathieu equation with initial condition $(p(0),q(0))$. Now we will focus on the change in the angle $\theta$ over one period of the monodromy mapping. 
\begin{equation} \label{delta} \Delta \theta ( \omega, \epsilon, p(0), q(0) ) = 2 \int_0^\pi \frac{H(q(t),p(t), t, \omega, \epsilon)}{p^2 + q^2}dt  
\end{equation} 
We will be concerned with the following condition
\begin{equation} \label{resonance} \Delta \theta = k \pi , \qquad k \in \mathbb{Z} 
\end{equation}
which will allow us to locate the stability transition curves in Figure \ref{fig:stability}.
\begin{lemma}
\label{rotation} 
The condition (\ref{resonance}) is equivalent to the nonexistence of eigenvalues of the monodromy matrix $A(\pi,\omega,\epsilon)$ of elliptic type $e^{i\alpha}, 0< \alpha < \pi$. The rotation function $\rho$  described in Theorem \ref{rotation} takes values $\pm 1$ on the locus of points in the $\omega, \epsilon$ plane where (\ref{resonance}) holds. Moreover, if $\rho = \pm 1$ then condition (\ref{resonance}) holds. 
\end{lemma}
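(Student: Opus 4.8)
The plan is to translate the analytic condition (\ref{resonance}) into a statement about the eigendirections of the monodromy matrix $A=A(\pi,\omega,\epsilon)\in Sp(2)=SL(2,\R)$, and then to read off the value of $\rho(A)$ from the nature of its eigenvalues.

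First I would fix a nontrivial solution $(q(t),p(t))$ of the Mathieu equation with initial data $v_0=(q(0),p(0))\neq 0$ and let $\theta(t)$ be a continuous determination of its polar angle in the $(q,p)$-plane; this is legitimate because a nontrivial solution of a linear system never vanishes, so $(q(t),p(t))\neq(0,0)$ for all $t$. From the identity $\dot\theta=2H/(p^2+q^2)$ --- obtained by differentiating $\tan\theta=p/q$ and substituting Hamilton's equations --- integration gives, as in (\ref{delta}), that $\Delta\theta=\theta(\pi)-\theta(0)$. The key observation is that one period of the flow is precisely multiplication by the monodromy matrix, $(q(\pi),p(\pi))^{T}=A\,v_0$, so $\Delta\theta\in\pi\Z$ if and only if $A v_0$ and $v_0$ span the same line, i.e.\ if and only if $v_0$ is a real eigenvector of $A$.

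Next I would establish the first equivalence. If (\ref{resonance}) holds, that is, there is an initial direction for which $\Delta\theta=k\pi$, then by the previous step $A$ has a real eigenvector and hence a real eigenvalue; since $A\in SL(2,\R)$ its eigenvalues are $\lambda,\lambda^{-1}$, so both are real and $A$ has no eigenvalue of the form $e^{i\alpha}$ with $0<\alpha<\pi$. Conversely, if $A$ has no such elliptic eigenvalue, then its characteristic polynomial has real roots, so $A$ admits a real eigenvector $v_0$; taking $(q(0),p(0))=v_0$, the solution returns at $t=\pi$ to the same line through the origin (to the same ray when the eigenvalue is positive, to the opposite ray when it is negative), whence $\theta(\pi)-\theta(0)$ is a real number congruent to $0$ modulo $\pi$, that is, an exact integer multiple of $\pi$. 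This is precisely (\ref{resonance}).

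Finally I would match this with the rotation function. By definition $\rho(A)=\lambda/|\lambda|$, where $\lambda$ is the eigenvalue with $|\lambda|<1$, or with $|\lambda|=1$ and $\lambda$ Krein-positive; hence $\rho(A)=\pm 1$ exactly when $\lambda$ is real (the degenerate cases $\lambda=\pm1$ included, by inspection), which by the normalization property of $\rho$ is the same as saying that $A$ has no eigenvalue $e^{i\alpha}$ with $0<\alpha<\pi$. Combining with the preceding paragraph gives $\rho(A)=\pm1$ if and only if (\ref{resonance}) holds, which contains the asserted ``moreover'' in both directions. The only genuine subtlety --- and the step I would be most careful about --- is the reading of condition (\ref{resonance}): it must be understood as asserting the existence of an initial condition $(p(0),q(0))$, namely an eigen-line of $A$, along which the swept angle is an exact multiple of $\pi$; for hyperbolic $A$ a generic initial direction produces $\Delta\theta\notin\pi\Z$, so the quantifier over initial data is essential. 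The remaining points --- non-vanishing of $(q(t),p(t))$, the passage from ``direction preserved modulo $\pi$'' to ``$\Delta\theta$ an exact integer multiple of $\pi$'', and the eigenvalue case analysis for $SL(2,\R)$ --- are routine.
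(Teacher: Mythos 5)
Your proposal is correct and follows essentially the same route as the paper's (very terse) proof: the paper also reduces condition (\ref{resonance}) to the existence of an invariant line for the monodromy matrix, which holds exactly when the eigenvalues are real reciprocals, and then reads off $\rho=\pm1$ from the definition of the rotation function. Your version merely fills in the details the paper leaves implicit, including the correct reading of the quantifier over initial conditions.
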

\begin{proof} The condition (\ref{resonance}) is equivalent to the existence of an invariant line in the $p, q$ plane for the monodromy matrix  $A(\pi,\omega,\epsilon)$. This happens if and only if the eigenvalues of the monodromy matrix are real reciprocal non zero numbers. 
\end{proof}
The eigenvalues of the monodromy matrix vary continuously with the parameters $\omega, \epsilon$. Therefore we have the following 
\begin{lemma}
\label{boundary}
The boundary of the regions in the $\omega^2,\epsilon$ plane where condition (\ref{resonance}) holds,  correspond to the locus of points where the eigenvalues of the monodromy matrix $A(\pi,\omega,\epsilon)$ are $\pm 1$. 
\end{lemma}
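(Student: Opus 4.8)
The plan is to translate condition (\ref{resonance}) into a condition on the trace of the monodromy matrix. Write $A = A(\pi,\omega,\epsilon)$; since $A \in Sp(2) = SL(2,\mathbb{R})$, its characteristic polynomial is $\mu^2 - \tau\mu + 1$ with $\tau := \mathrm{tr}\,A(\pi,\omega,\epsilon)$. Hence the eigenvalues of $A$ are real reciprocal numbers $\neq \pm 1$ when $|\tau| > 2$, a conjugate pair $e^{\pm i\alpha}$ with $0 < \alpha < \pi$ when $|\tau| < 2$, and the double eigenvalue $+1$ (resp.\ $\tau=-2$ gives $-1$) precisely when $\tau = 2$ (resp.\ $\tau = -2$). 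By Lemma \ref{rotation}, condition (\ref{resonance}) holds exactly when $A$ has no eigenvalue $e^{i\alpha}$ with $0 < \alpha < \pi$; so the set $\mathcal{R}$ of parameters in the $\omega^2,\epsilon$ plane at which (\ref{resonance}) holds is $\mathcal{R} = \{\,|\tau| \geq 2\,\}$, and its complement is the open elliptic set $\{\,|\tau| < 2\,\}$.

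Next I would use the continuous dependence of the monodromy on the parameters, already noted above: $A(\pi,\omega,\epsilon)$ is the time-$\pi$ value of the fundamental solution of a linear system whose coefficients depend continuously (indeed real-analytically) on $\omega^2$ and $\epsilon$, so $\tau$ is continuous. Hence $\{|\tau| < 2\}$ is open, $\mathcal{R}$ is closed, and any point of the topological boundary $\partial\mathcal{R}$ is simultaneously a limit of points where $|\tau| \geq 2$ and of points where $|\tau| < 2$; continuity of $\tau$ forces $|\tau| = 2$ at such a point. Thus $\partial\mathcal{R} \subseteq \{\tau = 2\}\cup\{\tau = -2\}$, which is exactly the locus where the eigenvalues of $A(\pi,\omega,\epsilon)$ equal $\pm 1$. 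This is the inclusion actually used in the sequel to locate the transition curves.

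For the reverse inclusion one must show that every parameter value with $\tau = \pm 2$ really lies on $\partial\mathcal{R}$, i.e.\ is a limit of points with $|\tau| < 2$, rather than an interior point of $\mathcal{R}$ at which $\tau$ merely touches $\pm 2$. Here I would combine the real-analytic, non-constant dependence of $\tau$ on the parameters with the classical Hill and Sturm-Liouville picture: along $\epsilon = 0$ the solutions of $\tau = \pm 2$ are the eigenvalues $\mu_n = n^2$, and the standard perturbation computation that produces the tongues of Figure \ref{fig:stability} shows that near each such point the level set $\{\tau = \pm 2\}$ locally separates $\{|\tau| > 2\}$ from $\{|\tau| < 2\}$; one then propagates this along the analytic transition curves, using that $\Delta\theta$ increases with $\omega^2$ (because $\dot\theta = 2H/(p^2+q^2)$ does) to rule out a reversal. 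I expect this converse to be the main obstacle: a priori a real-analytic trace function can attain $\pm 2$ at an interior extremum of $\mathcal{R}$, and excluding this needs either the explicit structure of the Mathieu equation or the oscillation-theoretic monotonicity just mentioned. In a first pass I would therefore state the lemma with the containment $\partial\mathcal{R}\subseteq\{\text{eigenvalues} = \pm 1\}$, which is all the applications require, and record the equality as the classical identification of the stability and instability curves.
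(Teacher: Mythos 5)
Your argument is essentially the paper's own: the paper offers no proof beyond the single preceding sentence (``The eigenvalues of the monodromy matrix vary continuously with the parameters $\omega,\epsilon$''), and your trace formulation is simply a cleaner way of packaging that continuity statement, since $A\in SL(2,\mathbb{R})$ reduces everything to the scalar invariant $\tau=\mathrm{tr}\,A$. Your first two paragraphs therefore reproduce, in careful form, exactly what the paper intends.

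Your third paragraph raises a point the paper glosses over, and you are right to flag it. Continuity of $\tau$ gives only $\partial\mathcal{R}\subseteq\{\tau=\pm 2\}$; it does not by itself exclude a parameter value where $\tau$ touches $\pm 2$ from inside $\mathcal{R}$ (a so-called coexistence point, where a double periodic or anti-periodic solution exists and the instability tongue degenerates). For general Hill equations this can happen, so the converse inclusion is genuinely extra information, requiring either the specific structure of the Mathieu coefficient or an oscillation-theoretic monotonicity of $\Delta\theta$ in $\omega^2$, as you indicate. Your suggestion to weaken the lemma to the containment $\partial\mathcal{R}\subseteq\{\text{eigenvalues}=\pm 1\}$ is the honest reading, and, as you observe, it is all that the subsequent index-counting argument in the Mathieu discussion actually uses. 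In short: same approach as the paper, but you have correctly identified that the paper's one-line justification proves only one direction of its stated correspondence.
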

To see how this can be used to actually determine the stability transition curves emanating from the eigenvalues $\mu_n$ on the $\omega^2$ axis, we consider the case where $\epsilon=0$ in the Mathieu equation first. Namely choosing initial conditions $p(0)= -n\sin(0), q(0) = \cos(0)$ we have 
\begin{equation} \label{resonance2}   \Delta \theta ( n, 0, p(0), q(0) ) = n\pi \end{equation}
We  extend this condition into the region where $\epsilon \neq 0$ using the implicit function theorem
\begin{proposition}
There exists $\epsilon_0= \epsilon_0(n) > 0$ such that the condition (\ref{resonance2}) holds for  every value of the initial condition $(p(0),q(0))$ and for $\omega=\omega(\epsilon,p(0),q(0)),  | \epsilon | < \epsilon_0$. The function $\omega(\epsilon,p(0),q(0))$ is analytic on its domain and  $\omega(0,p(0),q(0))= n$. 
\end{proposition}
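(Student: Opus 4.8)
The plan is to regard the left-hand side of (\ref{resonance2}) as an analytic function of all its arguments and to solve for $\omega$ by the implicit function theorem. Set
\[ \Phi(\omega,\epsilon,p_0,q_0):=\Delta\theta(\omega,\epsilon,p_0,q_0)=2\int_0^\pi\frac{H(q(t),p(t),t,\omega,\epsilon)}{p(t)^2+q(t)^2}\,dt, \]
where $(q(t),p(t))$ is the solution of the Mathieu system with $(q(0),p(0))=(q_0,p_0)\neq(0,0)$. Since the equation is linear, rescaling the initial condition rescales the whole solution and leaves the $0$-homogeneous integrand unchanged, so $\Phi$ depends on $(p_0,q_0)$ only through the projective point $[q_0:p_0]\in P^1(\mathbb{R})$; we may therefore take the initial conditions to lie on the compact circle $\{p_0^2+q_0^2=1\}$. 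The coefficients of the linear system are analytic in $(t,\omega,\epsilon)$, and a non-trivial solution never reaches the origin, so $p(t)^2+q(t)^2>0$ on $[0,\pi]$ and $\Phi$ is real-analytic jointly in $(\omega,\epsilon,p_0,q_0)$.

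I would then verify the two hypotheses of the implicit function theorem at $(\omega,\epsilon)=(n,0)$. \emph{Value.} For $\epsilon=0$, $\omega=n$ the system is $\ddot q+n^2q=0$, whose time-$\pi$ map is $(-1)^nI$; hence every line through the origin is invariant, and the polar angle $\theta$ — which for the unperturbed oscillator is a monotone reparametrization of the uniformly rotating elliptic angle $\psi$ and satisfies $\theta(\psi+\pi)=\theta(\psi)+\pi$ by the half-turn symmetry of the ellipse — advances by exactly $n\pi$ over one period, for \emph{every} initial condition. So $\Phi(n,0,p_0,q_0)=n\pi$ identically in $(p_0,q_0)$; in particular this forces $\omega(0,p_0,q_0)=n$. \emph{Transversality.} On the projectivized flow $\theta$ satisfies the closed Pr\"ufer-type equation
\[ \dot\theta=\cos^2\theta+a(t)\sin^2\theta,\qquad a(t)=\omega^2+\epsilon\cos 2t, \]
which is just $\dot\theta=2H/(p^2+q^2)$ read on $P^1(\mathbb{R})$. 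Differentiating in $\omega$, $u:=\partial_\omega\theta$ solves a linear scalar equation with $u(0)=0$ and non-negative forcing $2\omega\sin^2\theta$, so
\[ \partial_\omega\Phi=u(\pi)=\int_0^\pi\exp\left(\int_s^\pi(a(r)-1)\sin 2\theta(r)\,dr\right)2\omega\sin^2\theta(s)\,ds. \]
At $(n,0)$ we have $a\equiv n^2>0$, hence $\dot\theta>0$ and $\theta$ is strictly increasing, so $\sin^2\theta(s)>0$ for all but finitely many $s$; the integral is therefore strictly positive and $\partial_\omega\Phi(n,0,p_0,q_0)>0$. (This is the quantitative form of the Sturm comparison principle: the rotation of $\ddot q+a q=0$ strictly increases with the potential $a$.)

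Given these facts, the analytic implicit function theorem produces, for each fixed initial condition on the unit circle, a unique analytic $\omega=\omega(\epsilon,p_0,q_0)$ for small $|\epsilon|$ with $\omega(0,p_0,q_0)=n$ and $\Phi(\omega(\epsilon,p_0,q_0),\epsilon,p_0,q_0)=n\pi$, i.e. (\ref{resonance2}) holds along this curve. Finally, since $\partial_\omega\Phi$ is continuous and bounded away from $0$ on the compact set $\{\omega=n,\ \epsilon=0\}\times\{p_0^2+q_0^2=1\}$, the implicit function theorem applies uniformly in the initial condition: one obtains a single $\epsilon_0=\epsilon_0(n)>0$ valid for all $(p_0,q_0)$, and $\omega$ is analytic in $(\epsilon,p_0,q_0)$ on $\{|\epsilon|<\epsilon_0\}\times\{p_0^2+q_0^2=1\}$. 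The one step with genuine content is the transversality $\partial_\omega\Phi\neq 0$: monotonicity of $\Phi$ in $\omega$ by itself would not exclude the derivative vanishing precisely at the resonant value $\omega=n$, so the linearized (or an explicit harmonic-oscillator) computation above is really needed to get a \emph{strict} inequality; the remainder is routine bookkeeping with the implicit function theorem together with compactness of the circle of initial conditions.
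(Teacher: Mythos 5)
The paper itself gives no proof of this proposition; it only indicates (in the sentence immediately preceding the statement) that the condition (\ref{resonance2}) is extended off the axis ``using the implicit function theorem.'' Your argument supplies exactly the missing verifications needed to make that sketch rigorous, and it is correct: you reduce to the projective circle of initial directions by degree-zero homogeneity, observe analyticity from analytic ODE dependence, compute the value $\Phi(n,0,\cdot)=n\pi$ via the half-period central symmetry of the unperturbed ellipse, derive strict transversality $\partial_\omega\Phi>0$ from the Pr\"ufer equation $\dot\theta=\cos^2\theta+a(t)\sin^2\theta$ together with variation of parameters (or equivalently the strict Sturm comparison), and obtain a uniform $\epsilon_0$ by compactness of the circle. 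The one piece of genuine content you identify — that $\partial_\omega\Phi$ must be strictly nonzero, not merely that $\Phi$ is monotone — is exactly the right thing to highlight, since the paper leaves it tacit.

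One small presentational caveat: you state the IFT radius is uniform over the compact circle ``since $\partial_\omega\Phi$ is continuous and bounded away from $0$''; this is true, but strictly speaking the uniform version of the IFT also needs control of second-order (modulus-of-continuity) data of $\Phi$ on a neighborhood. Since $\Phi$ is real-analytic jointly in all its arguments on a compact set, such bounds are automatic, but the sentence as written slightly understates what is used. This does not affect correctness.
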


Now we can use these results to see how the Conley-Zehnder indices change as we move in the $\omega^2,\epsilon$ plane. First we compute the Conley-Zehnder index on the $\omega^2$-axis, i.e. when $\epsilon=0$. We will assume $\omega\geq 0$. The Hamiltonian of the Mathieu equation for $\epsilon=0$ is
\[H(q,p, \omega, 0, t) = \frac{1}{2} p^2 + \frac{1}{2} \omega^2 q^2.\]
The Hamiltonian equation is \[\begin{pmatrix}\dot{p}\\\dot{q}\end{pmatrix}=\begin{pmatrix}0&-\omega^2\\1&0\end{pmatrix}\begin{pmatrix}p\\q\end{pmatrix}\]

The fundamental solution for this Hamiltonian equation is symplcetically conjugate to \[\gamma(t)=R(\omega t)=\begin{pmatrix}\cos\omega t&-\sin\omega t\\\sin\omega t&\cos\omega t\end{pmatrix}\]
Direct computations will give us the indices $i_1, i_2$ for $\gamma$,
\[i_1(\gamma)=2n+1, \quad\text{if} \,\,2n<\omega<2n+2\]
\[i_2(\gamma)=2n+1, \quad\text{if} \,\,n<\omega<n+1\]
Thus from Theorem \ref{thm_dc} or from the fundamental solution $\gamma(t)$ itself, the Mathieu equation is elliptic and linearly stable on each open intervals $\omega^2\in (n^2, (n+1)^2)$ when $ \epsilon=0$.

For the $\omega^2, \epsilon$ plane, we can make the general observation that the index $i_1$ changes by either 1, or 2 whenever the eigenvalues of the monodromy matrix are repeated at +1, and the index $i_{-1}$ changes by either 1, or 2 whenever the eigenvalues of the monodromy matrix are repeated at -1 (cf. Definition \ref{omega_index}). The specific change in these indices depends on the crossing of the eigenvalues of the monodromy matrix at $\pm1$ as the parameters $\omega, \epsilon$ vary. In particular, when the eigenvalues $\lambda_1, \lambda_2$ of $A(\pi, \omega, \epsilon)$ repeat at $\pm1$ and change from the unit circle to the real line, and vise versa, then $i_{\pm1}$ changes by $1$; when $\lambda_1, \lambda_2$ repeat at $\pm1$ and remain on the unit circle before and after the collision, then $i_{\pm 1}$ changes by $2$.

\begin{figure}
	\centering
	\begin{tikzpicture}

	\draw [->] (-0.3, -2)--(-0.3, 2)node at (-0.3,1.9) [left] {$\epsilon$};
	\draw [->](-0.5, 0)--(9.6, 0) node  [below] {$\omega^2$};
	\draw [dotted](-0.4, 0.9)--(9.6, 0.9) ;
	%\draw  [domain=0.2:0.7] plot (\x, {2-8*\x*\x });
         \draw [thick] (0.1,-1) to [out=45, in=-95] (0.8,1);
         \draw [ thick] (0.19,1) to [out=-45, in=-85] (0.8,-1);
          \draw [thick] (1.6,-1) to [out=45, in=90] (1.9,0) to [out=90,in=-45] (1.6, 1);
          \draw [thick] (2.2,-1) to [out=135, in=90] (1.9,0) to [out=90,in=-135] (2.2, 1);
          \draw [thick] (4,-1) to [out=45, in=90] (4.3,0) to [out=90,in=-45] (4, 1);
          \draw [thick] (4.6,-1) to [out=135, in=90] (4.3,0) to [out=90,in=-135] (4.6, 1);
          \draw [thick] (8,-1) to [out=45, in=90] (8.3,0) to [out=90,in=-45] (8, 1);
          \draw [thick] (8.6,-1) to [out=135, in=90] (8.3,0) to [out=90,in=-135] (8.6, 1);
\node  at (0.8, 0) [below]  {\tiny 1};		
\node at (2.2, 0) [below]  {\tiny4};		
\node at (4.6, 0) [below]  {\tiny9};		
\node at (8.6, 0) [below]  {\tiny16};		

\node [red] at (-0.6, 0.95) [above]  {$i_2$};
\node [blue] at (-0.6, 0.95) [below]  {$i_1$};
\node [red] at (0, 0.95) [above]  {\tiny1};
\node [blue] at (0, 0.95) [below]  {\tiny1};	
\node [red] at (0.5, 0.95) [above]  {\tiny2};
\node [blue] at (0.5, 0.95) [below]  {\tiny1};	
\node [red] at (1.1, 0.95) [above]  {\tiny3};
\node [blue] at (1.1, 0.95) [below]  {\tiny1};
\node [red] at (1.9, 0.95) [above]  {\tiny4};
\node [blue] at (1.9, 0.95) [below]  {\tiny2};	
\node [red] at (3.1, 0.95) [above]  {\tiny5};
\node [blue] at (3.1, 0.95) [below]  {\tiny3};
\node [red] at (4.3, 0.95) [above]  {\tiny6};
\node [blue] at (4.3, 0.95) [below]  {\tiny3};
\node [red] at (6.3, 0.95) [above]  {\tiny7};
\node [blue] at (6.3, 0.95) [below]  {\tiny3};
\node [red] at (8.3, 0.95) [above]  {\tiny8};
\node [blue] at (8.3, 0.95) [below]  {\tiny4};
\node [red] at (9.2, 0.95) [above]  {\tiny9};
\node [blue] at (9.2, 0.95) [below]  {\tiny5};

	\end{tikzpicture}
	\caption{ The indices $i_1, i_2$ for Mathieu equation in $\omega^2, \epsilon$ plane. Odd values of $i_2$ are stability regions and even values of $i_2$ are instability regions.} \label{fig:mathieu}
\end{figure}
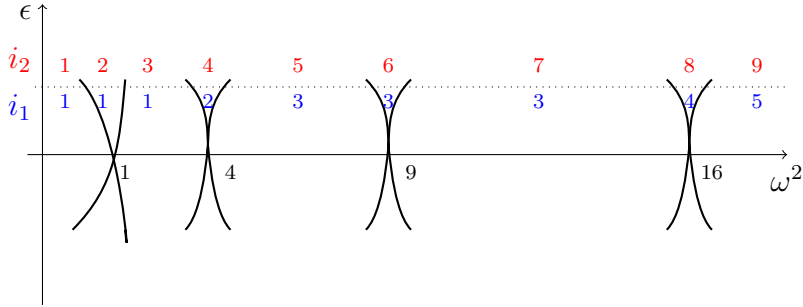

According to Lemma \ref {rotation} and Lemma \ref{boundary}, the eigenvalues of $A(\pi, \omega, \epsilon)$ when repeat at $\pm1$ always change from the unit circle to the real line or vise versa, when we cross the the boundary of the regions in the $\omega^2,\epsilon$ plane where condition (\ref{resonance}) holds. Therefore $i_{1}$ changes by $1$ each time when crossing such boundaries emanating from $\mu_{2n}=4n^2$ and $i_{-1}$ changes by $1$ each time when crossing such boundaries emanating from $\mu_{2n+1}=(2n+1)^2$. By Theorem \ref{iteration}, $i_2=i_1+i_{-1}$, thus $i_{2}$ changes by $1$ each time when crossing such boundaries. We will call those boundaries the stability transition curves. 

To make this clearer, we consider a crossection of the stability diagram with fixed value of $\epsilon >0$ (but not too large so that the intersection of the transition curve coming from $\omega=1$ and the $\epsilon$ axis lies above the value $\epsilon$) see Figure \ref{fig:mathieu}. The index $i_1$ starts with value +1 in the first stability domain (which contains the interval (0,1) on the $\omega^2$ axis). The first stability domain corresponds to $i_2 = 1$, and as we cross the first stability transition curve (emanating from the eigenvalue $\mu_1 = 1$) and enter the region where condition (\ref{resonance}) holds for $k=1$, the index $i_2$ jumps by +1. Crossing the domain of instability keeps the indices at the values $i_1=1, i_2=2$, and the next jump to $i_2=3$ occurs as the value of $\omega$ crosses the next transition curve (also emanating from $\mu_1 =1$). The next domain of stability (containing the open interval (1,4) on the $\omega^2$ axis)  is characterized by the values $i_1=1, i_2=3$ and this persists until the crossection encounters the next transition curve (emanating from the eigenvalue $\mu_2=4$) , where $i_1$ jumps by +1 to $i_1=2$. The index $i_2$ also jumps by +1 to $i_2=4$ at this crossing, and persists throughout this domain of instability (bifurcating from the eigenvalue $\mu_2 =4$). This pattern persists with successive stability domains characterized by the index $i_1 = 2k+1$ and $i_2 = i_1+l$ or $i_2=i_1+l+2$ where $l$ is an even interger, and instability domains characterized by the index $i_1=2k$ and $i_2=4k$, see figure \ref{fig:mathieu}. The domains of stability are characterized by odd values of $i_2$, and instability domains by even values of $i_2$, as indicated by Theorem \ref{thm_dc}.

 %%%%%%%%%%%%%%%%%%%%%%%%%%%%%%%%%%%%%%%%%%%%%%%%%%%%%%%%%%%%%
\subsection{The forced pendulum equation}
We recall the basic information on the forced pendulum equation, with periodic mean forcing zero. The governing equation is 
\begin{equation} \label{fp}  \ddot x + \beta \sin(x) = f(t), \qquad f(t+T) = f(t), \qquad \int_0^T fdt = 0 \end{equation}
As mentioned in the introduction, this is the Euler equation for the functional 

\[ A_T(q) = \int_0^T \left ( \frac{1}{2} \dot q^2+ \beta \cos(q) +qf \right ) dt, \qquad q \in H^1_T(S^1) \]
where $H^1_T(S^1)$ denotes the Sobolev space of absolutely continuous functions on [0,T], with $L^2[0,T]$ derivatives, and   periodic boundary conditions $q(0)=q(T)$. This functional  has at least two geometrically distinct critical points $q_1, q_2 \in H^1_T(S^1)$ \cite{mw}. The first $q_1$ is a globally minimizing critical point. Since $A_T(q_1 + 2\pi) = A_T(q_1)$, it follows that $q_1 + 2\pi$ is also globally minimizing and it can be shown that there is a mountain pass critical value and corresponding critical point $q_2$
\[ c = \min_{g \in G} \max_{0 \leq s \leq 1} A_T(g(s)) = A_T(q_2), \qquad g:[0,1] \rightarrow H^1_T(S^1), \qquad g \in G \]
where $G$ denotes the family of continuous maps of the interval [0,1] into $H^1_T(S^1)$ which satisfy $g(0) = q_1, g(1) = q_1 + 2\pi$. Recall that a mountain pass critical value  has the following topological property 
\[ \mathcal U^-(c) = \left \{ q \in H^1_T(S^1) | A_T(q) < c \right \} \] 
is not path connected. This condition means that the Morse index (and hence the Conley-Zehnder index as well) for a mountain pass $q_2$ must satisfy $i_1(q_2) \leq 1$ with equality when $q_2$ is nondegenerate in the sense that the nullity of the second variation is zero when considered on the space of T-periodic variations along $q_2$.  

  By transversality, each of the critical points $q_i$ corresponds to a T-periodic solution $x_i$ of the forced pendulum equation [\ref{fp}]. The following theorem from \cite{ort} represents the best known global result on elliptic stable periodic solutions for the forced pendulum equation. By global we mean results  which do not require a perturbation argument assuming that $f$ is small in some norm. Denote the set of continuous and T-periodic functions with mean value zero by 
\[ X = \left \{ f \in C\left ( \mathbb R / T \mathbb Z \right ) | \int_0^T f(t)dt = 0 \right \} . \]
\begin{theorem} (Ortega) Assume that the parameter $\beta$ satisfies the condition 
\begin{equation} \label{beta}  0 < \beta \leq \left ( \frac{\pi}{T} \right )^2 \end{equation}
Then the set $\mathcal S = \left \{ f \in X | (\ref{fp}) \text{ has a stable T-periodic solution} \right \} $ is prevalent in X. 
\end{theorem}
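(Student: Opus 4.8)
The plan is to exhibit, for a prevalent set of forcings $f \in X$, a linearly stable (elliptic) $T$-periodic solution using Theorem~\ref{thm_dc}, and then to upgrade linear stability to Lyapunov stability by a twist/KAM argument combined with a transversality argument in $X$. The candidate is the mountain pass solution $x_2$ coming from the critical point $q_2$ of $A_T$; recall the minimizer $x_1$ is Lyapunov unstable by Poincar\'e, so it is $x_2$ that must carry the stability. First I would run a Sard--Smale transversality argument on the $T$-periodic return map of (\ref{fp}) regarded as depending on the parameter $f \in X$; since $f$ enters the equation linearly the parameter dependence is transverse, and one concludes that outside a shy subset of $X$ every $T$-periodic solution, in particular $q_2$, is nondegenerate. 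For such $f$, $q_2$ is a nondegenerate mountain pass, so its Morse index is exactly $1$, and by Theorem~\ref{czmorse}, $i_1(x_2) = m^-(x_2) = 1$.

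Next I would compute the index of the double cover. By the Bott-type iteration formula (Theorem~\ref{iteration}) with $m=2$ and $z=1$,
\[ i_2(x_2) = i_1(\gamma_{x_2}^2) = i_1(\gamma_{x_2}) + i_{-1}(\gamma_{x_2}). \]
To control $i_{-1}(\gamma_{x_2})$, apply Theorem~\ref{czmorse} to the $2T$-periodic solution $x_2^2$: its index equals the Morse index of $A_{2T}$ at $x_2^2$. Because the Lagrangian along $x_2^2$ is $T$-periodic in $t$, the Hessian $D^2 A_{2T}(x_2^2)$ splits over the $\pm1$ eigenspaces of the time-shift $t \mapsto t+T$, i.e. over the $T$-periodic and $T$-antiperiodic variations, so $m^-_{2T}(x_2^2) = m^-(x_2) + m^-_a(x_2)$ with $m^-_a$ the antiperiodic Morse index; comparing with the iteration formula gives $i_{-1}(\gamma_{x_2}) = m^-_a(x_2)$. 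On antiperiodic variations $y$ (so $y(0) = -y(T)$) the second variation is $\int_0^T \bigl(\dot y^2 - \beta\cos(x_2(t))\,y^2\bigr)\,dt$; since $\beta\cos(x_2) \le \beta \le (\pi/T)^2$ and the antiperiodic Wirtinger inequality gives $\int_0^T \dot y^2 \ge (\pi/T)^2\int_0^T y^2$, this form is positive definite whenever $f \not\equiv 0$ (equality in Wirtinger forces $y$ to lie in the span of $\cos(\pi t/T)$ and $\sin(\pi t/T)$, which vanishes only at isolated points, while $\cos(x_2) < 1$ on a set of positive measure for a nonconstant $x_2$, and $f\not\equiv 0$ forces $x_2$ nonconstant). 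Hence $m^-_a(x_2) = 0$ and the antiperiodic nullity vanishes, so $i_{-1}(\gamma_{x_2}) = 0$ and $x_2^2$ is nondegenerate; with $i_1(x_2)=1$ this gives $i_2(x_2) = 1$, which is odd. By Theorem~\ref{thm_dc}, $x_2$ is elliptic, hence linearly stable, for every $f$ outside a shy set.

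Finally I would promote ellipticity to Lyapunov stability. Write $e^{\pm i\alpha(f)}$ for the Floquet multipliers of $x_2$. By Moser's invariant curve theorem, $x_2$ is Lyapunov stable provided $\alpha(f)$ avoids the finitely many low-order resonances ($\alpha \notin \{0,\pi\}$ is automatic from ellipticity and nondegeneracy; one also needs $\alpha \neq 2\pi/3,\ \pi/2$, and the fourth-order condition) and the first Birkhoff twist coefficient of the time-$T$ map at $x_2$ is nonzero. I would finish by showing the set of $f$ where one of these fails is shy: fix a probe $\phi_0 \in X$ (for instance $\phi_0(t) = \cos(2\pi t/T)$) and, for each fixed $f$, study the real-analytic one-parameter families $\lambda \mapsto \alpha(f + \lambda\phi_0)$ and $\lambda \mapsto$ twist coefficient; if these are nonconstant in $\lambda$, the exceptional $\lambda$'s are isolated hence of Lebesgue measure zero, which is exactly the Haar-null (shyness) condition, and then $X \setminus \mathcal{S}$, being contained in the union of the Sard--Smale bad set, $\{0\}$, and this exceptional set, is shy. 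This last step follows Ortega's original transversality analysis.

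The main obstacle is precisely this last step. The steps producing ellipticity are essentially self-contained once Theorem~\ref{thm_dc}, Theorem~\ref{czmorse}, and the iteration formula are in hand, and that is the new content enabled by our main result. But linear ellipticity alone does not yield Lyapunov stability, and establishing, for a \emph{prevalent} (not merely residual) set of $f$, both the avoidance of the low-order resonances and the nondegeneracy of the Birkhoff twist coefficient is delicate: one must verify that these quantities genuinely vary along a fixed finite-dimensional probe direction in $X$, uniformly in the base point $f$, which is the technical heart of Ortega's argument and where the bulk of the work lies.
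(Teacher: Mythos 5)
Your core strategy coincides with the paper's: take the mountain pass critical point $q_2$, use nondegeneracy to pin its Morse index (hence, by Theorem \ref{czmorse}, its Conley--Zehnder index) at $1$, show that the index of the double cover is also $1$ and therefore odd, and invoke Theorem \ref{thm_dc} to conclude ellipticity. The divergence is in the two supporting steps. For the bound $i_2\le 1$, the paper proves Lemma \ref{compare}: a direct comparison of the rotation of Lagrangian lines for the variational equation, $\Delta\theta_{kT}\le kT\sqrt{\beta}\le k\pi$, with strict inequality when $f\not\equiv 0$, applied with $k=2$. You instead split $i_2=i_1+i_{-1}$ via Theorem \ref{iteration} and kill $i_{-1}$ with the antiperiodic Wirtinger inequality; this is correct and essentially equivalent (the antiperiodic spectral gap $(\pi/T)^2$ is the same constant driving the paper's comparison, and the equality analysis when $f\not\equiv 0$ is the same), though it leans on the identification of $i_{-1}$ with the antiperiodic Morse index, a standard Bott-type fact that the paper uses only implicitly in the Mathieu discussion and never states for Lagrangian systems. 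For nondegeneracy, the paper does not rerun a Sard--Smale argument but simply quotes Ortega's prevalence result \cite{ort13} that all $T$-periodic solutions are nondegenerate for $f$ in an open prevalent set $\mathcal R$; your transversality sketch is plausible but is precisely the content of that reference, so you gain nothing by re-deriving it. Finally, your last paragraph is an honest acknowledgment of something the paper silently omits: the text only establishes ellipticity (linear stability) of the mountain pass and defers the twist/KAM upgrade to Lyapunov stability, together with the prevalence of the nonresonance and nonvanishing-twist conditions, entirely to Ortega's original argument. Your assessment that this is where the bulk of the work lies is accurate, and neither your proposal nor the paper supplies it.
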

For an explanation of the term prevalent, we refer to Ortega. Our goal here is to recapture this result, using the ideas in the current paper. 

Given a T-periodic solution u(t) for the pendulum equation (\ref{fp}), we consider the variational equation
\[  \ddot x + \beta \cos(u(t)) x = 0. \] 
we will use the idea mentioned in the last section concerning the change in the argument of the solution $z(t) =(x(t),y(t)), y = -\dot x$, for the variational equation which is 
\[ \Delta \theta_T = 2 \int_0^T \frac{H(x(t),y(t), t, \beta)}{x^2 + y^2}dt, \qquad 2H(x,y,t,\beta) = y^2 + \beta \cos(u(t)) x^2  \] We will refer to a projective line in $P^1(\R^2)$ as a Lagrangian line. 
\begin{lemma} \label{compare}
Assuming that $f(t)$ is not the zero function, given the condition (\ref{beta}),   every Lagrangian line solution of the variational equation rotates by an argument less than  $k\pi$ over $k$ periods of time.
\end{lemma}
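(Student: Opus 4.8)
The plan is to estimate $\Delta\theta_T$ by controlling the integrand $2H(x,y,t,\beta)/(x^2+y^2) = (y^2 + \beta\cos(u(t))x^2)/(x^2+y^2)$ and comparing with the constant-coefficient case. First I would observe that the angle $\theta$ along a solution $z(t) = (x(t),y(t))$ of the variational equation satisfies $\dot\theta = (y^2 + \beta\cos(u(t))x^2)/(x^2+y^2)$, so $\dot\theta \le 1$ pointwise whenever $\beta\cos(u(t)) \le 1$; under condition (\ref{beta}) we have $0 < \beta \le (\pi/T)^2 \le 1$ (for $T \le \pi$; more to the point the relevant bound is $\beta \le (\pi/T)^2$ itself), so the coefficient $\beta\cos(u(t))$ is bounded above by $(\pi/T)^2$ and below by $-(\pi/T)^2$. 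This gives the differential inequality $\dot\theta(t) \le \big(y^2 + (\pi/T)^2 x^2\big)/(x^2+y^2)$, whose right-hand side is exactly the angular speed for the harmonic oscillator $\ddot x + (\pi/T)^2 x = 0$, i.e. the rotation function of $R(\frac{\pi}{T}t)$.

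Next I would invoke the classical Sturm-type comparison: integrating the inequality over $[0,kT]$ and using that the harmonic oscillator with frequency $\pi/T$ rotates by exactly $\pi$ over each interval of length $T$, one gets $\Delta\theta_{kT} \le k\pi$. The point of the lemma is the \emph{strict} inequality, so the crux is to rule out equality. Equality in the integrated bound forces $\dot\theta(t) = \big(y^2+(\pi/T)^2x^2\big)/(x^2+y^2)$ for a.e. $t$, which in turn forces $\beta\cos(u(t)) = (\pi/T)^2$ precisely at those $t$ where $x(t)\neq 0$ — and since a nontrivial solution of a second-order linear ODE has only isolated zeros, this means $\beta\cos(u(t)) \equiv (\pi/T)^2$, hence $\cos(u(t))$ is constant, hence $u(t)$ is constant. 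But a constant $u \equiv u_0$ is a $T$-periodic solution of (\ref{fp}) only if $\beta\sin(u_0) = f(t)$, forcing $f \equiv 0$, contrary to hypothesis. Therefore the inequality is strict over a single period, and a fortiori $\Delta\theta_{kT} < k\pi$ for every $k \ge 1$.

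I would phrase the comparison argument carefully to handle the possibility that $x(t)$ vanishes: on each maximal subinterval where $x \neq 0$ one has the strict pointwise inequality unless $\cos u$ is locally constant, and the zeros of $x$ form a discrete set contributing nothing to the integral, so the global comparison and the rigidity conclusion both go through. The main obstacle is precisely this rigidity step — getting from "$\Delta\theta_{kT} \le k\pi$" to "$<$" requires identifying the equality case and then using the hypothesis $f \not\equiv 0$ to exclude it; everything else is a routine Sturm comparison against the harmonic oscillator of frequency $\pi/T$. One should also note that the constant-coefficient benchmark rotates by exactly $\pi$ over time $T$ is what makes $(\pi/T)^2$ the sharp threshold in condition (\ref{beta}), which is why the borderline case $\beta = (\pi/T)^2$ still yields strict inequality as long as $u$ is genuinely time-dependent.
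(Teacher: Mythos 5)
Your proposal is correct and follows essentially the same route as the paper: bound the angular velocity $\dot\theta = (y^2+\beta\cos(u(t))x^2)/(x^2+y^2)$ by that of a harmonic oscillator (the paper uses frequency $\sqrt{\beta}$, you use $\pi/T$, which under condition (\ref{beta}) amounts to the same bound $k\pi$), then rule out equality by observing that it forces $\cos(u(t))$ to be identically $1$, hence $u$ constant, hence $f\equiv 0$. Your explicit appeal to the Pr\"ufer/Sturm angle comparison and your care about the zeros of $x(t)$ in the rigidity step are, if anything, slightly more precise than the paper's phrasing, but the argument is the same.
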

\begin{proof} We use a direct comparison argument for the change in the argument of the Lagrangian line, as the time parameter increases by the period $kT$. 
\[ \Delta \theta_{kT} =  \int_0^{kT} \frac{y^2 + \beta \cos(u(t)) x^2}{x^2 + y^2}dt  \leq   \int_0^{kT} \frac{y^2 + \beta  x^2}{x^2 + y^2}dt  = 
kT \sqrt{\beta}. \]
The reason for the equality in the last term of this calculation is that the integrand is precisely the rate of change of the argument for a harmonic oscillator of frequency $\sqrt{\beta}$, independent of $(x,y)$.  With the condition (\ref{beta}) we find that the argument of the Lagrangian line would increase by no more than $k\pi$. If this increase were equal to $k\pi$ then we would need to have $\cos(u(t))=1$ for all t. However this can happen for a periodic solution $u(t)$ of the pendulum equation (\ref{fp}) only when $f(t)$ is the zero function. Therefore the increase in the argument is less than $k\pi$. 
\end{proof} 
A comment about the case when $f(t) =0$. The proof of the Lemma indicates that the constant solutions of the pendulum equation, in the absence of forcing, will rotate by at most $\pi$  in the time interval [0,T], when the condition $ 0 < \beta \leq \left ( \frac{\pi}{T} \right )^2$ is met. This implies that the equilibrium solution at the origin is a mountain pass critical point of the functional $A_T(q)$ when $f=0$. However the double cover of the equilibrium, may not be a mountain pass in this case. 
 
As a result of this Lemma for $k=1$, we can observe immediately that if (\ref{beta}) holds and $f(t)$ is not identically zero, then a hyperbolic periodic orbit $u(t)$ must have real and positive Floquet multipliers. Thus we can conclude the mountain pass periodic solution $q_2$ is elliptic if it is non-degenerate. The reason is that its Conley-Zehnder index $i_1(q_2)=1$, thus the Floquet multipliers of $q_2$ can only be in $\{a<0: a\in\R\}\cup S^1\setminus\{1\}$. But the Floquet multipliers cannot be negative, hence $q_2$ must be elliptic. 

However we may pursue a different line of argument. Lemma \ref{compare} for $k=2$ tells us that the index for the double cover of $u(t)$ is no bigger than 1, i.e. $i_2(u)\leq 1$. Thus a non-degenerate mountain pass periodic solution $q_2$ has indices $i_1(q_2)=i_2(q_2)=1$, thus $q_2$ is elliptic by Theorem \ref{thm_dc}. 

In terms of non-degeneracy, we will use Ortega's result \cite{ort13} that there is an open and prevalent set $\mathcal R\subset X$ so that every $T$-periodic solution of (\ref{fp}) is non-degenerate for $f\in \mathcal R$. Our function $f(t)$ will always be in the set $\mathcal R\setminus\{0\}$.

 %%%%%%%%%%%%%%%%%%%%%%%%%%%%%%%%%%%%%%%%%%%%%%%%%%%%%%%%%%%%%%%%%%%%%%%%%%%%%%%%%%%%%%%%%%%%%%%%%%%%%%%%%%%%%%%%%%%%%%%%%%%%%%%%%%%%%%%%%%%%%%%%%%%%%%%%%%%%%%%%%%%%%%%%%%%%%%%%

\end{document}